\title{Finite-dimensional algebras are $(m>2)$-Calabi-Yau-tilted}
\author{Sefi Ladkani}
\thanks{This work was partially supported by the Center for Absorption
in Science, Ministry of Aliyah and Immigrant Absorption, State of Israel.}
\address{Department of Mathematics, University of Haifa, Mount Carmel,
Haifa 31905, Israel}
\email{ladkani.math@gmail.com}
\DeclareMathOperator{\add}{add}
\DeclareMathOperator{\End}{End}
\DeclareMathOperator{\Ext}{Ext}
\DeclareMathOperator{\fd}{fd}
\DeclareMathOperator{\gldim}{gldim}
\DeclareMathOperator{\hh}{H}
\DeclareMathOperator{\Hom}{Hom}
\DeclareMathOperator{\id}{id}
\DeclareMathOperator{\pd}{pd}
\DeclareMathOperator{\per}{per}
\DeclareMathOperator{\RHom}{\mathbf{R}Hom}
\newcommand{\wb}{\overline}
\newcommand{\wt}{\widetilde}
\newcommand{\eps}{\varepsilon}
\newcommand{\cC}{\mathcal{C}}
\newcommand{\cD}{\mathcal{D}}
\newcommand{\cF}{\mathcal{F}}
\newcommand{\fr}{\mathfrak{r}}
\newcommand{\cY}{\mathcal{Y}}
\newcommand{\bZ}{\mathbb{Z}}
\newtheorem{theorem}{Theorem}[section]
\newtheorem{lemma}[theorem]{Lemma}
\newtheorem{prop}[theorem]{Proposition}
\newtheorem{cor}[theorem]{Corollary}
\theoremstyle{definition}
\newtheorem{defn}[theorem]{Definition}
\newtheorem{remark}[theorem]{Remark}
\newtheorem{example}[theorem]{Example}
\newtheorem{constr}[theorem]{Construction}
\numberwithin{equation}{section}
\begin{document}

\begin{abstract}
We observe that over an algebraically closed field, any finite-dimensional
algebra is the endomorphism algebra of an $m$-cluster-tilting object in a
triangulated $m$-Calabi-Yau category, where $m$ is any integer greater than
$2$.
\end{abstract}

\maketitle

\section{Introduction}

Cluster categories were introduced by Buan, Marsh, Reineke, Reiten
and Todorov~\cite{BMRRT06} as a means to model the combinatorics of
cluster algebras with acyclic skew-symmetric exchange matrices within
the framework of quiver
representations (see also the work of Caldero, Chapoton and
Schiffler~\cite{CCS06} for the case of $A_n$ quivers).

The cluster category of an acyclic quiver is the orbit category of
the bounded derived category of its path algebra with
respect to the autoequivalence $F=\nu \Sigma^{-2}$ where $\nu$ denotes
the Serre functor and $\Sigma$ is the suspension functor.
By a result of Keller~\cite{Keller05}, the cluster category is a triangulated
2-Calabi-Yau category. Particular role is played by the 
2-cluster-tilting objects within this category
(the precise definitions will be given in Section~\ref{ssec:notat} below)
which model the clusters in the corresponding cluster algebra.

As already shown in~\cite{Keller05}, by replacing $\Sigma^{-2}$ by
$\Sigma^{-m}$ for $m>2$ and considering the orbit category with respect to the
autoequivalence $\nu \Sigma^{-m}$, one gets an $m$-Calabi-Yau triangulated
category with $m$-cluster-tilting objects. The categories obtained in this way,
called \emph{$m$-cluster categories}, were the subject of many investigations,
see~\cite{BaurMarsh08,Thomas07,Wraalsen09,ZhouZhu09}.

The endomorphism algebras of 2-cluster-tilting objects in cluster categories
are known as \emph{cluster-tilted algebras} and they possess many remarkable
representation-theoretic and homological
properties~\cite{ABS08,BMR06,BMR07,KellerReiten07}.
More generally, consider an \emph{$m$-Calabi-Yau-tilted algebra}, i.e.\
an algebra $A=\End_{\cC}(T)$ where $\cC$ is a $K$-linear,
triangulated, $\Hom$-finite, $m$-Calabi-Yau category over a field $K$ and $T$
is an $m$-cluster-tilting object in $\cC$ for some positive integer $m$.
Keller and Reiten have shown the following results in the case $m=2$
(for the first two points, see Sections~2 and~3 of~\cite{KellerReiten07}
and for the third one, see \cite[\S2]{KellerReiten08}):
\begin{itemize}
\item
$A$ is Gorenstein of dimension at most $m-1$ (i.e.\ $\id_A A \leq m-1$ and
$\pd_A DA \leq m-1$, where $D(-)=\Hom_K(-,K)$);

\item
The stable category of Cohen-Macaulay $A$-modules
is $(m+1)$-Calabi-Yau;

\item
If $K$ is algebraically closed, $\cC$ is algebraic and $A \cong KQ$ for
an acyclic quiver $Q$, then $\cC$ is triangle equivalent to the $m$-cluster
category of~$Q$.
\end{itemize}

In addition, they have shown that these results hold also in the case $m>2$
provided that one imposes an additional condition on the $m$-cluster-tilting
object $T$ stated in terms of the vanishing of some of its negative
extensions, namely
\begin{equation} \label{e:vosnex}
\tag{$\star$}
\Hom_{\cC}(T, \Sigma^{-i} T)=0 \text{ for any $0 < i < m-1$},
\end{equation}
see \cite[\S4]{KellerReiten08}.
Note that by the $m$-Calabi-Yau property of $\cC$ this condition is equivalent
to the vanishing of the positive extensions $\Hom_{\cC}(T, \Sigma^i T)$ for all
$m < i < 2m-1$, whereas these extensions for $0 < i < m$ always vanish since
$T$ is $m$-cluster-tilting.

In the terminology of~\cite{Beligiannis15}, the condition~\eqref{e:vosnex}
means that $T$ is $(m-2)$-corigid, and
in~\cite[Theorem~B]{Beligiannis15} Beligiannis presents a more refined result
connecting the corigidity property of an $m$-cluster-tilting object 
with the Gorenstein property of its endomorphism algebra and the Calabi-Yau
property of its stable category of Cohen-Macaulay modules.
We note also that a condition analogous to~\eqref{e:vosnex}, stated for
$m$-cluster-tilting subcategories inside bounded derived categories of modules
and abbreviated ``vosnex'', appears in the works of
Amiot-Oppermann~\cite[Definition~4.9]{AmiotOppermann14a} and
Iyama-Oppermann~\cite[Notation~3.5]{IyamaOppermann13}.

Whereas for $m=2$ the condition~\eqref{e:vosnex} is empty and hence
automatically holds, this is no longer the case for $m \geq 3$.
In particular, there are examples of triangulated $m$-Calabi-Yau categories
$\cC$ and $m$-cluster-tilting objects $T \in \cC$ for which the vosnex
condition~\eqref{e:vosnex} does not hold and moreover the endomorphism
algebra $\End_{\cC}(T)$ is a path algebra of a connected acyclic quiver, see
Iyama and Yoshino \cite[Theorem~9.3]{IyamaYoshino08} for $m=3$ and
\cite[Theorem~10.2]{IyamaYoshino08} for $m$ odd, and also
\cite[Example~4.3]{KellerReiten08}.
Another example, where the algebra $\End_{\cC}(T)$ is not Gorenstein, is given
in~\cite[Example~5.3]{KellerReiten07}.

The purpose of this note is to extend this class of examples by showing that
given $m \geq 3$, \emph{any} finite-dimensional algebra over an algebraically
closed field is the endomorphism algebra of an $m$-cluster-tilting object in an
$m$-Calabi-Yau triangulated category.
Hence, without any further assumptions on $\cC$ and $T$, one cannot say too
much about the algebras $\End_{\cC}(T)$.

To this end we invoke the construction of generalized cluster categories
due to Amiot~\cite{Amiot09} in the case $m=2$ and generalized by
Guo~\cite{Guo11} to the case $m>2$. This construction produces an $m$-Calabi-Yau
triangulated category with an $m$-cluster-tilting object from any dg-algebra
which is homologically smooth, bimodule $(m+1)$-Calabi-Yau and satisfies
additional finiteness conditions. A rich source of such dg-algebras is provided
by the deformed Calabi-Yau completions defined and investigated by
Keller~\cite{Keller11}.

Given a basic finite-dimensional algebra $A$, we choose a dg-algebra $B$ 
with two properties; firstly, the underlying graded algebra of $B$ is the path
algebra of a graded quiver whose arrows are concentrated in degrees $0$ and
$-1$, and secondly, $\hh^0(B) \cong A$. Such dg-algebra can be constructed
from any presentation of $A$ as a quotient of a path algebra of a quiver by an 
ideal generated by a finite sequence of elements. Conversely, any such
dg-algebra arises in this way.

It turns out that for any $m \geq 2$, the $(m+1)$-Calabi-Yau completion of $B$
is a Ginzburg dg-algebra $\Gamma$ of a graded quiver with homogeneous 
superpotential of degree $2-m$ which can be written explicitly in terms of the
quiver and the sequence of elements.
 In the case $m=2$, the zeroth homology
$\hh^0(\Gamma)$ is a split extension of $A$, whereas when $m>2$ it is
isomorphic to $A$, hence $\Gamma$ satisfies the finiteness conditions required
in the construction of~\cite{Amiot09,Guo11} and thus gives rise to a
$\Hom$-finite $m$-Calabi-Yau triangulated category with an $m$-cluster-tilting
object whose endomorphism algebra is isomorphic to~$A$.

The $m$-cluster-tilting object we get almost never satisfies the vosnex 
condition~\eqref{e:vosnex}. More precisely, that condition holds if and only if
$A$ is the path algebra of an acyclic quiver and $B$ is chosen such that $B=A$.
Moreover, the flexibility in the choice of the dg-algebra $B$ allows to
construct, for certain algebras $A$ and any $m>2$, inequivalent triangulated
$m$-Calabi-Yau categories $\cC \not \simeq \cC'$ with $m$-cluster-tilting
objects $T \in \cC$ and $T' \in \cC'$ such that
$\End_{\cC}(T) \cong \End_{\cC}(T') \cong A$. 

\section{Recollections}

\subsection{Notations}
\label{ssec:notat}

We recall the definitions of Calabi-Yau triangulated categories and
cluster-tilting objects. Throughout, we fix a field $K$.

\begin{defn}
Let $\cC$ be a $K$-linear triangulated category with suspension functor
$\Sigma$.
\begin{enumerate}
\renewcommand{\theenumi}{\alph{enumi}}
\item
We say that $\cC$ is \emph{$\Hom$-finite} if the spaces $\Hom_{\cC}(X,Y)$ are
finite-dimensional over $K$ for any $X, Y \in \cC$.

\item
Let $m \in \bZ$.
We say that $\cC$ is \emph{$m$-Calabi-Yau} if $\cC$ is $\Hom$-finite and
there exist functorial isomorphisms
\[
\Hom_{\cC}(X,Y) \cong D\Hom_{\cC}(Y, \Sigma^m X)
\]
for any $X, Y \in \cC$,
where $D$ denotes the duality $D(-) = \Hom_K(-, K)$.
\end{enumerate}
\end{defn}

For an object $X$ in an additive category $\cC$, denote by
$\add X$ the full subcategory of $\cC$
whose objects are finite direct sums of direct summands of $X$.
For a subcategory $\cY$ of $\cC$, let
${^\perp}\cY = \left\{ X \in \cC \,:\, \Hom_{\cC}(X,Y)=0
\text{ for any $Y \in \cY$} \right\}$.

\begin{defn}
Let $\cC$ be a triangulated $m$-Calabi-Yau category for some integer
$m \geq 1$.
An object $T$ of $\cC$ is \emph{$m$-cluster-tilting} if:
\begin{enumerate}
\renewcommand{\theenumi}{\roman{enumi}}
\item
$\Hom_{\cC}(T, \Sigma^i T) = 0$ for any $0 < i < m$; and

\item
if $X \in \cC$ is such that $\Hom_{\cC}(X, \Sigma^i T) = 0$ 
for any $0 < i < m$, then $X \in \add T$.
\end{enumerate}
Equivalently, $\add T = \bigcap_{0 < i < m} {^\perp} \Sigma^i(\add T)$.
\end{defn}

\begin{defn}
Let $m \geq 1$.
A $K$-algebra $\Lambda$ is \emph{$m$-Calabi-Yau-tilted}
(\emph{$m$-CY-tilted} for short)
if there exist a triangulated $m$-Calabi-Yau category $\cC$ and an
$m$-cluster-tilting object $T$ of $\cC$ such that $\Lambda \cong \End_{\cC}(T)$.
\end{defn}

\subsection{Generalized cluster categories}

\sloppy
In this section we briefly review the construction of 
triangulated $m$-Calabi-Yau categories with $m$-cluster-tilting object
due to Amiot~\cite{Amiot09} (for the case $m=2$)
and its generalization by Guo~\cite{Guo11} (for the case $m>2$).

Let $\Gamma$ be a differential graded (dg) algebra over a field $K$.
Denote by $\cD(\Gamma)$ its derived category and by $\per \Gamma$ its 
smallest full triangulated subcategory containing $\Gamma$ and closed under
taking direct summands.
Let $\cD_{\fd}(\Gamma)$ denote the full subcategory of $\cD(\Gamma)$
whose objects are those of $\cD(\Gamma)$ with finite-dimensional total
homology.

\begin{defn}
$\Gamma$ is said to be \emph{homologically smooth} if $\Gamma \in \per \Gamma^e$, where $\Gamma^e = \Gamma^{op} \otimes_K \Gamma$.
\end{defn}

Let $\Gamma$ be homologically smooth and let
$\Omega = \RHom_{\Gamma^e}(\Gamma, \Gamma^e)$.
By definition, $\Omega \in \cD((\Gamma^e)^{op})$. We can view $\Omega$
as an object of $\cD(\Gamma^e)$ via restriction of scalars along the 
morphism $\tau \colon \Gamma^e \xrightarrow{\sim} (\Gamma^e)^{op}$ given by
$\tau(x \otimes y) = y \otimes x$.
By~\cite[Lemma~3.4]{Keller11}
(see also~\cite[Lemma~4.1]{Keller08} and~\cite[Lemma~2.1]{Guo11})
one has $\cD_{\fd}(\Gamma) \subseteq \per \Gamma$ and
\begin{equation} \label{e:Serre}
\Hom_{\cD(\Gamma)}(L \otimes_{\Gamma} \Omega, M) \cong
D \Hom_{\cD(\Gamma)}(M, L)
\end{equation}
for any $L \in \cD(\Gamma)$, $M \in \cD_{\fd}(\Gamma)$.

\begin{defn}
Let $m \in \bZ$. If $\RHom_{\Gamma^e}(\Gamma, \Gamma^e) \cong
\Sigma^{-m} \Gamma$ in $\cD(\Gamma^e)$
we say that $\Gamma$ is \emph{bimodule $m$-Calabi-Yau}.
\end{defn}

If $\Gamma$ is homologically smooth and
bimodule $m$-Calabi-Yau then the triangulated category
$\cD_{\fd}(\Gamma)$ is $m$-Calabi-Yau. More precisely,
\eqref{e:Serre} yields functorial isomorphisms
\[
\Hom_{\cD(\Gamma)}(\Sigma^{-m} L, M) \cong
D \Hom_{\cD(\Gamma)}(M, L)
\]
for any $L \in \cD(\Gamma)$, $M \in \cD_{\fd}(\Gamma)$.

\begin{theorem}[\protect{\cite[\S2]{Amiot09}},
\protect{\cite[\S2]{Guo11}}]
\label{t:cluster}
Let $m \geq 1$ and
let $\Gamma$ be a dg-algebra satisfying the following conditions:
\begin{enumerate}
\renewcommand{\theenumi}{\roman{enumi}}
\item
$\Gamma$ is homologically smooth;

\item
$\hh^i(\Gamma)=0$ for any $i>0$;

\item
$\dim_K \hh^0(\Gamma) < \infty$;

\item
$\Gamma$ is bimodule $(m+1)$-Calabi-Yau.
\end{enumerate}

Consider the triangulated category
$\cC_{\Gamma} = \per \Gamma / \cD_{\fd}(\Gamma)$.
Then:
\begin{enumerate}
\renewcommand{\theenumi}{\alph{enumi}}
\item
$\cC_{\Gamma}$ is $\Hom$-finite and $m$-Calabi-Yau.

\item
For any $i \in \bZ$, set
$\cD^{\leq i} = \left\{ X \in \cD(\Gamma) \,:\, \hh^p(X)=0
\text{ for all $p>i$} \right\}$ and let
$\cF = \cD^{\leq 0} \cap {^\perp}\cD^{\leq -m} \cap \per \Gamma$.
The restriction of the canonical projection $\pi \colon \per \Gamma
\to \cC_{\Gamma}$
to $\cF$ induces an equivalence of $K$-linear categories
$\cF \xrightarrow{\sim} \cC_{\Gamma}$.

\item \label{it:t:CT}
The image $\pi \Gamma$ of $\Gamma$ in  $\cC_{\Gamma}$
is an $m$-cluster-tilting object and
$\End_{\cC_{\Gamma}}(\pi \Gamma) \cong \hh^0(\Gamma)$.
\end{enumerate}
\end{theorem}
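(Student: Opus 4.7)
The plan is to exploit the standard t-structure on $\cD(\Gamma)$ whose aisle is $\cD^{\leq 0}$ and whose heart is equivalent to $\modf \hh^0(\Gamma)$---the latter being valid because of condition (ii). The subcategory $\cF$ should be a fundamental domain for the Verdier quotient $\cC_{\Gamma} = \per \Gamma / \cD_{\fd}(\Gamma)$, and once this is established, part (a) will follow from condition (iii) together with the Calabi-Yau duality \eqref{e:Serre} and condition (iv), while part (c) will come from a direct cohomological computation. The first task is therefore to prove part (b).

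I would first show that for any $X \in \per \Gamma$ there exists a triangle $X' \to X \to X'' \to \Sigma X'$ with $X' \in \cF$ and $X'' \in \cD_{\fd}(\Gamma)$. The construction is iterative, cutting off the parts of $X$ in cohomological degrees $>0$ and in degrees $\leq -m$; conditions (ii) and (iii) ensure, via a filtration in the heart, that the truncated pieces have finite-dimensional total cohomology and thus lie in $\cD_{\fd}(\Gamma)$. This already makes $\pi|_{\cF}$ essentially surjective. For full faithfulness, I would argue that any roof $X \xrightarrow{s} Z \leftarrow Y$ between objects of $\cF$, with cone of $s$ in $\cD_{\fd}(\Gamma)$, can be replaced by an ordinary morphism, because the orthogonality $\Hom_{\per \Gamma}(\cF, \cD^{\leq -m}) = 0$ kills the obstruction after suitably truncating the cone. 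Hom-finiteness of $\cC_{\Gamma}$ then follows by bounding $\dim_K \Hom_{\per \Gamma}(X, Y)$ for $X, Y \in \cF$ via induction on cohomological length, using (iii) and the finiteness of Hom-spaces in the heart.

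For the $m$-Calabi-Yau property, condition (iv) converts \eqref{e:Serre} into
\[
\Hom_{\cD(\Gamma)}(L, \Sigma^{m+1} M) \cong D \Hom_{\cD(\Gamma)}(M, L)
\]
for $L \in \cD(\Gamma)$ and $M \in \cD_{\fd}(\Gamma)$, making $\cD_{\fd}(\Gamma)$ itself $(m+1)$-Calabi-Yau. To descend to the $m$-Calabi-Yau duality on $\cC_{\Gamma}$, I would pick $X, Y \in \cF$, use the fundamental-domain triangle together with the connecting morphism, and identify $\Hom_{\cC_{\Gamma}}(\pi X, \Sigma^m \pi Y)$ with $D \Hom_{\cC_{\Gamma}}(\pi Y, \pi X)$. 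For (c), full faithfulness of $\pi|_{\cF}$ gives $\Hom_{\cC_{\Gamma}}(\pi \Gamma, \Sigma^i \pi \Gamma) = \hh^i(\Gamma)$, which vanishes for $0 < i < m$ by (ii) and equals $\hh^0(\Gamma)$ when $i=0$; the converse condition in the definition of cluster-tilting is obtained by showing that an object of $\cF$ with all such Ext's vanishing must be concentrated in degree $0$, hence a projective $\hh^0(\Gamma)$-module, i.e.\ a summand of some $\Gamma^{\oplus r}$ in $\cC_{\Gamma}$.

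I expect the main obstacle to be the fundamental-domain step: the standard t-structure truncations do not a priori preserve $\per \Gamma$, so constructing the $X' \in \cF$ requires combining truncations with the hypothesis that the ``bad'' parts of $X$ remain finite-dimensional, and the roof-to-morphism reduction requires a careful use of the orthogonality built into $\cF$ together with the compactness of $X$. Once these technical points are in place, the Calabi-Yau descent and the cluster-tilting assertions are comparatively formal.
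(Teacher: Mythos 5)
The paper does not actually prove Theorem~\ref{t:cluster}: it is imported wholesale from Amiot (for $m=2$) and Guo (for $m>2$), so the only thing to compare your sketch against is their arguments. Your overall architecture --- fundamental domain, descent of the duality~\eqref{e:Serre}, and a resolution of objects of $\cF$ by objects of $\add\Gamma$ --- does match theirs, but two steps as you describe them are genuinely problematic.

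First, the claim that conditions (ii) and (iii) ensure ``via a filtration in the heart'' that the truncations of a perfect object land in $\cD_{\fd}(\Gamma)$ is not justified: the subquotients $\hh^p(X)$ for $X \in \per\Gamma$ are a priori arbitrary $\hh^0(\Gamma)$-modules, and nothing in (ii)--(iii) alone bounds their dimension (only $\hh^0(\Gamma)$, not $\hh^{-1}(\Gamma)$, is assumed finite-dimensional). The needed statement --- that $\hh^p(X)$ is finite-dimensional for every $X \in \per\Gamma$ and every $p$ --- is a separate lemma in Amiot/Guo whose proof also uses condition (i): homological smoothness gives $\cD_{\fd}(\Gamma) \subseteq \per\Gamma$, and then a downward induction (using that $\tau_{\leq n}\Gamma$ stays perfect, hence compact, so its top cohomology is a finitely generated $\hh^0(\Gamma)$-module) does the job. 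Since essential surjectivity, full faithfulness on $\cF$, $\Hom$-finiteness and the Calabi--Yau descent all rest on this lemma, it cannot be waved through; and the descent of the $(m+1)$-CY duality on $\cD_{\fd}(\Gamma)$ to the $m$-CY duality on the quotient is itself a delicate limit argument rather than a formal consequence.

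Second, your argument for the generation half of the cluster-tilting property fails. An object $X \in \cF$ with $\Hom_{\cC_\Gamma}(X, \Sigma^i T)=0$ for $0<i<m$ is not forced to have homology concentrated in degree $0$: the object $\Gamma$ itself satisfies this vanishing and in general has homology in all degrees $\leq 0$. Worse, by the finiteness lemma above, a perfect object concentrated in degree $0$ has finite-dimensional total homology, hence lies in $\cD_{\fd}(\Gamma)$ and becomes \emph{zero} in $\cC_\Gamma$; so your criterion would only ever detect the zero object. The correct argument shows that every $X \in \cF$ sits in an iterated extension $P_m \to \cdots \to P_1 \to P_0 \to X$ with all $P_j \in \add\Gamma$, and then uses the assumed vanishing of $\Hom_{\cC_\Gamma}(X,\Sigma^i T)$ for $0<i<m$ to split this resolution in $\cC_\Gamma$, exhibiting $\pi X$ as a direct summand of $\pi P_0 \in \add T$.
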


The object $\pi \Gamma$ occurring in part~\eqref{it:t:CT} of the theorem 
is called the \emph{canonical $m$-cluster-tilting} object in $\cC_{\Gamma}$.
The statement of the next lemma concerning negative extension groups of the 
canonical $m$-cluster-tilting object is implicit in the proof
of~\cite[Corollary~3.4]{Guo11}.

\begin{lemma} \label{l:snex}
Let $\Gamma$ be a dg-algebra satisfying the conditions of
Theorem~\ref{t:cluster} and
let $T=\pi \Gamma$ be the canonical $m$-cluster-tilting object in
$\cC_{\Gamma}$. Then $\Hom_{\cC}(T, \Sigma^{-i} T) \cong \hh^{-i}(\Gamma)$
for any $0 \leq i \leq m-1$.
\end{lemma}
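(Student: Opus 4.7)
\emph{Plan.} The strategy is to apply the equivalence $\cF \xrightarrow{\sim} \cC_\Gamma$ of Theorem~\ref{t:cluster}(b), which reduces the computation of $\Hom_{\cC_\Gamma}(T, \Sigma^{-i} T)$ to a computation in $\cD(\Gamma)$, provided that representatives of both objects lie inside $\cF$. Since $T = \pi \Gamma$ and the projection $\pi$ commutes with $\Sigma$, the natural candidates are $\Gamma$ itself and its negative shifts $\Sigma^{-i} \Gamma$.

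\textbf{Key step.} I would verify that $\Sigma^{-i} \Gamma \in \cF$ for every $0 \le i \le m-1$. Membership in $\per \Gamma$ is immediate. Membership in $\cD^{\le 0}$ follows from $\hh^p(\Sigma^{-i} \Gamma) = \hh^{p+i}(\Gamma)$ together with condition (ii) of Theorem~\ref{t:cluster}, since $p > 0$ forces $p + i > 0$. For membership in ${}^\perp \cD^{\le -m}$, one uses the standard identification
\[
\Hom_{\cD(\Gamma)}(\Sigma^{-i} \Gamma, Y) \cong \Hom_{\cD(\Gamma)}(\Gamma, \Sigma^{i} Y) \cong \hh^{i}(Y),
\]
which vanishes for any $Y \in \cD^{\le -m}$ because $i \ge 0 > -m$.

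\textbf{Conclusion.} Once $\Gamma$ and $\Sigma^{-i} \Gamma$ are known to lie in $\cF$, the equivalence $\cF \xrightarrow{\sim} \cC_\Gamma$ makes Hom-spaces in the quotient category coincide with Hom-spaces in $\per \Gamma$, yielding
\[
\Hom_{\cC_\Gamma}(T, \Sigma^{-i} T) \cong \Hom_{\cD(\Gamma)}(\Gamma, \Sigma^{-i}\Gamma) \cong \hh^{-i}(\Gamma),
\]
as desired. The case $i=0$ consistently recovers the identification $\End_{\cC_\Gamma}(T) \cong \hh^0(\Gamma)$ from Theorem~\ref{t:cluster}(c).

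I do not anticipate a significant obstacle: the bulk of the work is already packaged in the equivalence of Theorem~\ref{t:cluster}(b), and the only content specific to this lemma is the short verification that the negative shifts $\Sigma^{-i}\Gamma$ for $0 \le i \le m-1$ remain inside the fundamental domain $\cF$, which rests entirely on the vanishing $\hh^{p}(\Gamma)=0$ for $p > 0$ and the numerical inequality $i < m$.
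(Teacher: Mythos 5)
Your overall strategy (reduce to a Hom-computation inside the fundamental domain $\cF$) is the right one, but the key step is wrong: the negative shifts $\Sigma^{-i}\Gamma$ for $1 \le i \le m-1$ do \emph{not} lie in $\cF$ in general. The error is a sign slip in the shift convention. With the convention forced by your own conclusion $\Hom_{\cD(\Gamma)}(\Gamma,\Sigma^{-i}\Gamma)\cong \hh^0(\Sigma^{-i}\Gamma)\cong\hh^{-i}(\Gamma)$, one has $\hh^{p}(\Sigma^{-i}\Gamma)=\hh^{p-i}(\Gamma)$, not $\hh^{p+i}(\Gamma)$. Taking $p=i>0$ gives $\hh^{i}(\Sigma^{-i}\Gamma)=\hh^{0}(\Gamma)$, which is nonzero whenever $\Gamma\neq 0$, so $\Sigma^{-i}\Gamma\notin\cD^{\le 0}$ and hence $\Sigma^{-i}\Gamma\notin\cF$. (Your two displayed identities are in fact mutually inconsistent: in the first, $\Sigma^{-i}$ raises the cohomological index by $i$, while in the second, $\Sigma^{+i}$ also raises it by $i$.) As written, the ``Key step'' therefore fails, and the appeal to the equivalence $\cF\xrightarrow{\sim}\cC_\Gamma$ is not justified for the pair $(\Gamma,\Sigma^{-i}\Gamma)$.

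The repair is short and is exactly what the paper does: first rewrite
\[
\Hom_{\cC_\Gamma}(T,\Sigma^{-i}T)\cong\Hom_{\cC_\Gamma}(\Sigma^{i}T,T)=\Hom_{\cC_\Gamma}(\pi\Sigma^{i}\Gamma,\pi\Gamma),
\]
and then use that the \emph{positive} shifts $\Gamma,\Sigma\Gamma,\dots,\Sigma^{m-1}\Gamma$ all lie in $\cF$ (here $\hh^{p}(\Sigma^{i}\Gamma)=\hh^{p+i}(\Gamma)=0$ for $p>0$, and $\Hom(\Sigma^{i}\Gamma,Y)\cong\hh^{-i}(Y)=0$ for $Y\in\cD^{\le -m}$ since $-i>-m$). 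The equivalence $\cF\xrightarrow{\sim}\cC_\Gamma$ then gives $\Hom_{\cC_\Gamma}(\pi\Sigma^{i}\Gamma,\pi\Gamma)\cong\Hom_{\cD(\Gamma)}(\Sigma^{i}\Gamma,\Gamma)\cong\hh^{-i}(\Gamma)$. So the statement survives, but your proof needs this shift of both arguments before the fundamental-domain argument applies.
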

\begin{proof}
As observed in~\cite{Guo11}, the objects $\Gamma, \Sigma \Gamma, \dots,
\Sigma^{m-1} \Gamma$ are in the fundamental domain $\cF$.
Therefore, for any $0 \leq i \leq m-1$, 
\begin{align*}
\Hom_{\cC_{\Gamma}}(\pi \Gamma, \Sigma^{-i} \pi \Gamma) &\cong
\Hom_{\cC_{\Gamma}}(\Sigma^i \pi \Gamma, \pi \Gamma) 
= \Hom_{\cC_{\Gamma}}(\pi \Sigma^i \Gamma, \pi \Gamma) \cong
\Hom_{\cD(\Gamma)}(\Sigma^i \Gamma, \Gamma) \\
&\cong
\Hom_{\cD(\Gamma)}(\Gamma, \Sigma^{-i} \Gamma) \cong \hh^{-i}(\Gamma) .
\end{align*}
\end{proof}

\subsection{Ginzburg dg-algebras}
Ginzburg dg-algebras were introduced by Ginzburg in~\cite{Ginzburg06}.
We recall their definition in the case of a graded quiver with homogeneous
superpotential and quote the result of Keller~\cite{Keller11} that they
are homologically smooth and bimodule Calabi-Yau
(see also the paper~\cite{VandenBergh15} by Van den Bergh).
We note that a graded version of quivers with potentials (and their mutations)
has also been introduced in~\cite{AmiotOppermann14b} and~\cite{dTdVVdB13},
however in these papers one still implicitly considers Ginzburg dg-algebras
which are bimodule 3-Calabi-Yau.
In contrast, in the setting described below the degree of the superpotential
affects the Calabi-Yau dimension of its Ginzburg dg-algebra.

A \emph{quiver} is a finite directed graph. More precisely, it is a quadruple
$Q=(Q_0, Q_1, s, t)$, where $Q_0$ and $Q_1$ are finite sets
(of \emph{vertices} and \emph{arrows}, respectively) and
$s,t \colon Q_1 \to Q_0$ are functions specifying for each arrow its starting
and terminating vertex, respectively.
A quiver $Q$ is \emph{graded} if we are given a grading
$|\cdot| \colon Q_1 \to \bZ$.

A \emph{path} $p$ in $Q$ is a sequence of arrows
$\alpha_1 \alpha_2 \dots \alpha_n$
such that $s(\alpha_{i+1})=t(\alpha_i)$ for all $1 \leq i < n$.
For a path $p$ we denote by $s(p)$ its starting vertex $s(\alpha_1)$ and by
$t(p)$ its terminating vertex $t(\alpha_n)$.
A path $p$ is a \emph{cycle} if it starts and ends at the same vertex,
i.e.\ $s(p)=t(p)$. Any vertex $i \in Q_0$ gives rise to a cycle $e_i$ of
length zero with $s(e_i)=t(e_i)=i$.

The \emph{path algebra} $KQ$ has a basis consisting of the paths of $Q$,
and the product of two paths $p$ and $q$ is their concatenation $pq$
if $s(q)=t(p)$ and zero
otherwise. The path algebra is graded if $Q$ is graded, with the
degree of a path being the sum of the degrees of its arrows. The
degree of a homogeneous element $x$ of $KQ$ will be denoted by $|x|$.

Consider the $K$-bilinear map $[-,-] \colon KQ \times KQ \to KQ$
whose value on a pair of homogeneous elements $x, y$ is given by their
\emph{supercommutator} $[x,y] = xy - (-1)^{|x||y|}yx$. Denote by
$[KQ,KQ]$ the linear subspace of $KQ$ spanned by all the supercommutators.
The quotient $KQ/[KQ,KQ]$ has a basis consisting of cycles considered up to
cyclic permutation ``with signs''.

\begin{defn}
A \emph{superpotential} on $Q$ is a homogeneous element in $KQ/[KQ,KQ]$.
\end{defn}

Any arrow $\alpha \in Q_1$ gives rise to a linear map
$\partial_\alpha \colon KQ/[KQ,KQ] \to KQ$ 
(called \emph{cyclic derivative with respect to $\alpha$})
whose value on any cycle $p$ is given by
\begin{equation} \label{e:deriv}
\partial_\alpha p = (-1)^{|\alpha|}
\sum_{p=u \alpha v} (-1)^{|u|(|\alpha|+|v|)} vu
\end{equation}
where the sum runs over all possible decompositions $p=u \alpha v$ with
$u, v$ paths of length $\geq 0$.
More explicitly, if $p=\alpha_1 \alpha_2 \dots \alpha_n$ has degree $w$, then
\[
\partial_\alpha (\alpha_1 \alpha_2 \dots \alpha_n) = 
(-1)^{|\alpha|} \sum_{\ell \,:\, \alpha_{\ell} = \alpha}
(-1)^{(w-1)(|\alpha_1|+\dots+|\alpha_{\ell-1}|)}
\alpha_{\ell+1} \dots \alpha_n \alpha_1 \dots \alpha_{\ell-1}
\]
is homogeneous of degree $w-|\alpha|$.

\begin{defn} \label{def:Ginzburg}
Let $Q$ be a graded quiver and let $m \in \bZ$. Let $\wb{Q}$ be the graded
quiver whose vertices are those of $Q$ and its set of arrows consists of
\begin{itemize}
\item
the arrows of $Q$ (with their degree unchanged);
\item
an arrow $\alpha^* \colon j \to i$ of degree $1-m-|\alpha|$
for each arrow $\alpha \colon i \to j$ of $Q$;

\item
a loop $t_i \colon i \to i$ of degree $-m$ for each vertex $i \in Q_0$.
\end{itemize}

Let $W$ be a superpotential on $Q$ of degree $2-m$.
The \emph{Ginzburg dg-algebra of $(Q,W)$}, denoted $\Gamma_{m+1}(Q,W)$, is the
dg-algebra whose underlying graded algebra is the path algebra $K\wb{Q}$
and the differential $d$ is defined by its action on the generators as
\begin{itemize}
\item
$d(\alpha) = 0$ and $d(\alpha^*) = \partial_{\alpha} W$
for each $\alpha \in Q_1$;

\item
$d(t_i) = e_i (\sum_{\alpha \in Q_1} [\alpha, \alpha^*]) e_i$
for each $i \in Q_0$.
\end{itemize}
\end{defn}

\begin{remark}
Note that for each $\alpha \in Q_1$ the element $\partial_\alpha W$ is
homogeneous of degree $2-m-|\alpha|$ and the supercommutator
$[\alpha,\alpha^*]$ is homogeneous of degree $1-m$, hence the definition of the
differential $d$ makes sense. Moreover, as $[\alpha,\alpha^*]=\alpha \alpha^* -
(-1)^{|\alpha||\alpha^*|} \alpha^* \alpha$ for any $\alpha \in Q_1$, by using
the sign conventions in~\eqref{e:deriv} one verifies that $d^2(t_i)=0$ for any
$i \in Q_0$, so that $d$ is indeed a differential.
Note also that in~\cite{Keller11} the differential of $t_i$ is
$(-1)^{m-1}$ times the one given here, but of course by replacing each
$t_i$ by $(-1)^{m-1}t_i$ one sees that the dg-algebras are isomorphic.
\end{remark}

\begin{theorem}[\protect{\cite[Theorem~6.3]{Keller11}}]
\label{t:GinzburgCY}
Let $m \in \bZ$ and
let $W$ be a superpotential of degree $2-m$ on a graded quiver $Q$. Then
$\Gamma_{m+1}(Q,W)$ is homologically smooth and bimodule $(m+1)$-Calabi-Yau.
\end{theorem}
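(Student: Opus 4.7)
The plan is to establish both claims simultaneously by exhibiting an explicit finite projective resolution of $\Gamma = \Gamma_{m+1}(Q,W)$ as a $\Gamma^e$-module that is self-dual up to the shift by $-(m+1)$. The existence of such a finite projective resolution gives homological smoothness, while the self-duality computes $\RHom_{\Gamma^e}(\Gamma,\Gamma^e)$ as $\Sigma^{-(m+1)}\Gamma$ in $\cD(\Gamma^e)$, which is precisely the bimodule $(m+1)$-Calabi-Yau property.

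Concretely, I would write down a four-term complex of finitely generated projective $\Gamma^e$-modules whose terms are indexed by the four families of generators of $\wb Q$: the vertices $i$ (giving a term $\bigoplus_i \Gamma e_i \otimes e_i \Gamma$ which maps onto $\Gamma$ by multiplication), the original arrows $\alpha \in Q_1$, the dual arrows $\alpha^*$, and the loops $t_i$, with degree shifts matching the degrees of the corresponding generators of $\wb Q$. The differentials would be built from multiplication by the generators $\alpha$ and $\alpha^*$, from the cyclic derivatives $\partial_\alpha W$ that already appear in $d(\alpha^*)$, and from the supercommutators $[\alpha, \alpha^*]$ that appear in $d(t_i)$, so that the dg-structure of $\Gamma$ is encoded uniformly in the boundary maps.

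There are two main steps. First, one verifies that the complex is indeed a resolution of $\Gamma$: I would do this by passing to the associated graded for the path-length filtration on $\wb Q$, where the claim reduces to exactness of the classical Koszul-type bimodule resolution of a path algebra, and then lift a contracting homotopy back to the full dg-setting. Second, one must exhibit an explicit isomorphism between the complex and its $\Gamma^e$-dual shifted by $-(m+1)$; the pairing naturally matches vertices with loops (since $|t_i| = -m$) and arrows with their duals (since $|\alpha| + |\alpha^*| = 1-m$), and one checks that it intertwines the boundary maps.

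The main obstacle is sign bookkeeping. Matching the Koszul signs from the Hom-duality with the signs appearing in the cyclic derivative~\eqref{e:deriv} and in the supercommutator defining $d(t_i)$ is delicate, and it is precisely here that the homogeneity assumption $|W| = 2-m$ intervenes to make the total degree shift of the self-duality come out to exactly $-(m+1)$; a mismatch in the degree of $W$ would land the resolution in the wrong Calabi-Yau dimension.
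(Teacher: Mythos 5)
This theorem is not proved in the paper at all: it is imported verbatim from \cite[Theorem~6.3]{Keller11}, so there is no internal argument to compare against. Your plan is the \emph{direct} route --- exhibiting the explicit four-term self-dual complex of $\Gamma^e$-modules indexed by vertices, arrows, dual arrows and loops --- which is essentially the approach of Ginzburg and of Van den Bergh \cite{VandenBergh15}, and it is a correct strategy. Keller's own proof is organized differently: he first shows in general that a (deformed) $(m+1)$-Calabi-Yau completion of a homologically smooth dg algebra is homologically smooth and bimodule $(m+1)$-Calabi-Yau, using the tensor-algebra presentation and its canonical two-term bimodule resolution, and then identifies $\Gamma_{m+1}(Q,W)$ as such a completion of the graded path algebra (an identification of the same flavour as Lemma~\ref{l:CYcompl} here); that detour buys the Morita invariance of the construction, which this paper exploits in its first example, whereas your route buys explicitness at the cost of heavier verification. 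Two substantive points your sketch should not gloss over: (i) the self-duality of the middle differential of your complex is governed by the matrix of second cyclic derivatives of $W$, and its required graded symmetry uses that $W$ is a class in $KQ/[KQ,KQ]$, i.e.\ invariant under signed cyclic permutation --- homogeneity of degree $2-m$ fixes the shift $-(m+1)$ but does not by itself make the complex self-dual; and (ii) the comparison must be carried out in $\cD(\Gamma^e)$ after twisting by the flip $\tau$, and in the deformed case $W \neq 0$ this compatibility is genuinely delicate (the original argument in \cite{Keller11} required a correction supplied by Van den Bergh), so ``sign bookkeeping'' understates what remains to be checked.
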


We record two useful observations. Let $Q$ be a graded quiver, $m \in \bZ$
an integer and $W$ a homogeneous superpotential on $Q$ of degree $2-m$.

\begin{lemma} \label{l:reparrow}
Suppose that $\alpha \colon i \to j$ is an arrow in $Q$ such that no term of
$W$ contains $\alpha$. Define a graded quiver $Q'$ by $Q'_0 = Q_0$ and
$Q'_1 = Q_1 \setminus \{\alpha\} \cup \{\alpha^*\}$ where $\alpha^* \colon
j \to i$ has degree $1-m-|\alpha|$. Then $W$ can be naturally viewed as
a superpotential $W'$ on $Q'$ and $\Gamma_{m+1}(Q,W) \cong \Gamma_{m+1}(Q',W')$.
\end{lemma}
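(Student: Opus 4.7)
The plan is to exhibit an explicit isomorphism of dg-algebras $\phi \colon \Gamma_{m+1}(Q,W) \to \Gamma_{m+1}(Q',W')$. The key observation is that the doubled quivers $\wb{Q}$ and $\wb{Q'}$ have the same vertex set and their arrow sets are naturally in bijection: every $\beta \in Q_1 \setminus \{\alpha\}$ together with its dual $\beta^*$ appears in both doubles; the arrow $\alpha^*$ of $\wb{Q}$ is simply an element of $Q'_1 \subseteq \wb{Q'}_1$; and the only remaining arrow of $\wb{Q'}$ is $(\alpha^*)^*$, going from $i$ to $j$ with degree $1-m-(1-m-|\alpha|) = |\alpha|$, matching $\alpha$ exactly. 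The loops $t_i$ also coincide on both sides. I would therefore define $\phi$ on generators by the obvious identification on all arrows except $\alpha \mapsto \eps \cdot (\alpha^*)^*$, where $\eps \in \{\pm 1\}$ is to be fixed so that the differentials match. Since degrees are preserved, $\phi$ is at the outset an isomorphism of graded $K$-algebras.

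Next, the hypothesis that $\alpha$ does not occur in any term of $W$ means that $W$ is a cyclic element built from arrows in $Q_1 \setminus \{\alpha\} \subseteq Q'_1$, and the very same element defines the superpotential $W'$ on $Q'$; both are homogeneous of degree $2-m$. I would then verify $\phi \circ d = d \circ \phi$ on generators. For any $\beta \in Q_1 \setminus \{\alpha\}$, the equalities $d\beta = 0$ and $d\beta^* = \partial_\beta W = \partial_\beta W'$ hold in both dg-algebras, and $\partial_\beta W$ involves only common arrows, so this is automatic. For $\alpha$, we have $d\alpha = 0$ on the $Q$-side, while $d\phi(\alpha) = \eps \cdot d(\alpha^*)^* = \eps \cdot \partial_{\alpha^*} W'$, which vanishes since $W'$ contains no $\alpha^*$ (because $W$ contains no $\alpha$). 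For $\alpha^*$, $d\alpha^* = \partial_\alpha W = 0$ by hypothesis, matching $d\alpha^* = 0$ on the $Q'$-side since $\alpha^* \in Q'_1$.

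The only nontrivial step, and the single place where the sign must actually be pinned down, concerns the loops $t_i$. Applying $\phi$ to $dt_i = e_i\bigl(\sum_{\gamma \in Q_1}[\gamma,\gamma^*]\bigr)e_i$: all summands indexed by $\gamma \in Q_1 \setminus \{\alpha\}$ match the corresponding summands of the $Q'$-sum term-by-term, but the remaining contribution $\phi([\alpha,\alpha^*]) = \eps\,[(\alpha^*)^*, \alpha^*]$ must equal $[\alpha^*, (\alpha^*)^*]$. Graded antisymmetry of the supercommutator gives $[\alpha^*, (\alpha^*)^*] = -(-1)^{|\alpha||\alpha^*|}[(\alpha^*)^*, \alpha^*]$, and since $|\alpha|(1-m-|\alpha|) \equiv m|\alpha| \pmod 2$ (using $|\alpha|^2 \equiv |\alpha|$), the correct choice is $\eps = -(-1)^{m|\alpha|}$. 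With this sign, $\phi$ commutes with $d$ on every generator and is therefore the desired dg-algebra isomorphism. The main obstacle is only this minor sign bookkeeping in the $t_i$-term, in the same spirit as the sign remark following Definition~\ref{def:Ginzburg}; everything else in the argument is a direct translation via the identification of the doubled quivers.
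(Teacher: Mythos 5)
Your proof is correct and follows essentially the same route as the paper: both identify $\wb{Q}$ with $\wb{Q'}$ by matching $(\alpha^*)^*$ with $\alpha$ and fixing all other arrows, then check compatibility of the differentials on generators. The only difference is that you explicitly pin down the sign $\eps=-(-1)^{m|\alpha|}$ needed to make the $[\alpha,\alpha^*]$ term of $d(t_i)$ match $[\alpha^*,(\alpha^*)^*]$ — a detail the paper's proof glosses over — and your sign computation is right.
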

\begin{proof}
Since no term of $W$ contains $\alpha$, we can view $W$ as an element $W'$ in
the path algebra $KQ'$. The graded quivers $\wb{Q'}$ and $\wb{Q}$ are
isomorphic by the map $\varphi \colon \wb{Q'} \to \wb{Q}$ sending $(\alpha^*)^*$
to $\alpha$ and fixing all other arrows.
Moreover, since $\partial_\alpha W = 0 = \partial_{\alpha^*} W'$ and
$\partial_\beta W = \partial_\beta W'$ for any $\beta \in Q_1 \cap Q'_1$,
the map $\varphi$ induces an isomorphism of the Ginzburg dg-algebras
$\Gamma_{m+1}(Q',W') \cong \Gamma_{m+1}(Q,W)$.
\end{proof}

For a subset of arrows $\Omega \subseteq Q_1$, let
$Q_{\Omega}$ be the subquiver of $Q$ with $(Q_{\Omega})_0=Q_0$ and
$(Q_{\Omega})_1 = \Omega$. For the definition of Calabi-Yau completion,
see~\cite[\S4]{Keller11}.

\begin{lemma} \label{l:CYcompl}
Suppose that $\Omega \subseteq Q_1$ is a set of arrows such that
$W = \sum_{\beta \in Q_1 \setminus \Omega} \beta \omega_\beta$ with
$\omega_\beta \in KQ_{\Omega}$ for each $\beta \not \in \Omega$.
Consider the subquiver $Q'$ of $\wb{Q}$ defined by $Q'_0 = Q_0$ and
$Q'_1 = \Omega \cup \{\beta^* : \beta \in Q_1 \setminus \Omega\}$.
Let $d$ be the differential on $\Gamma_{m+1}(Q,W)$. Then:
\begin{enumerate}
\renewcommand{\theenumi}{\alph{enumi}}
\item
$d(KQ') \subseteq KQ'$, hence $B=(KQ',d)$ is a sub-dg-algebra of
$\Gamma_{m+1}(Q,W)$.

\item
$\Gamma_{m+1}(Q,W)$ is isomorphic to the $(m+1)$-Calabi-Yau completion
of $B$.
\end{enumerate}
\end{lemma}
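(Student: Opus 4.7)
My plan splits along the two parts. For part~(a), I check that $d$ preserves $KQ'$ by evaluating it on each generator, i.e.\ on each arrow of $Q'$. For $\alpha \in \Omega$ we have $d(\alpha)=0 \in KQ'$ trivially. For $\beta^*$ with $\beta \in Q_1 \setminus \Omega$, we have $d(\beta^*) = \partial_\beta W$, and the key input is the explicit form $W = \sum_{\gamma \in Q_1 \setminus \Omega} \gamma \omega_\gamma$ with $\omega_\gamma \in KQ_\Omega$. Since $\beta \notin \Omega$, the arrow $\beta$ does not occur inside any $\omega_\gamma$, so in the cyclic derivative formula~\eqref{e:deriv} applied to a representative cycle $\gamma \omega_\gamma$, the only decomposition in which $\alpha_\ell = \beta$ is the one with $\gamma = \beta$ and $\ell = 1$. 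Plugging in gives $\partial_\beta W = (-1)^{|\beta|} \omega_\beta \in KQ_\Omega \subseteq KQ'$, which settles~(a).

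For part~(b), the plan is to invoke Keller's explicit description~\cite[\S4]{Keller11} of the $(m+1)$-Calabi-Yau completion $\Pi_{m+1}(B)$ of a graded path dg-algebra. The completion is built from $B$ by freely adjoining, for each arrow of $Q'$, a ``dual'' bimodule generator of appropriately shifted degree, together with a loop of degree $-m$ at each vertex. In our case the dual of $\alpha \in \Omega$ has degree $1-m-|\alpha|$, matching $\alpha^* \in \wb{Q}$, while the dual of $\beta^*$ (for $\beta \notin \Omega$) has degree $1-m-|\beta^*|=|\beta|$ and can thus be identified with $\beta$. Under these identifications the generators of $\Pi_{m+1}(B)$ are exactly the arrows of $\wb{Q}$, so the underlying graded algebras of $\Pi_{m+1}(B)$ and $\Gamma_{m+1}(Q,W)$ both coincide with $K\wb{Q}$.

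It remains to match the two differentials. On the sub-dg-algebra $B$ they agree by construction (this is part~(a)). On the newly adjoined generators one must check the three Ginzburg rules: $d(\alpha^*) = \partial_\alpha W$ for $\alpha \in \Omega$, $d(\beta) = 0$ for $\beta \notin \Omega$, and $d(t_i) = e_i(\sum_{\gamma \in Q_1} [\gamma, \gamma^*]) e_i$. The second and third of these are essentially built into Keller's construction; the main obstacle is the first, where one must see that the contributions to $\partial_\alpha W$ coming from the various summands $\beta \omega_\beta$ of $W$ are exactly what is produced by transposing the differentials $d(\beta^*) = \pm \omega_\beta$ of $B$ along the Koszul-dual resolution used to compute $\Pi_{m+1}(B)$. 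Rather than verify this sign-by-sign, I would construct a canonical dg-algebra morphism $\Pi_{m+1}(B) \to \Gamma_{m+1}(Q,W)$ extending the inclusion $B \hookrightarrow \Gamma_{m+1}(Q,W)$ provided by~(a), and then verify bijectivity on arrows of $\wb{Q}$ (already done above) and compatibility with differentials (reduced to the computation just outlined).
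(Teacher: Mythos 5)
Your part~(a) is correct and is exactly the paper's argument: $d(\alpha)=0$ for $\alpha\in\Omega$, and the cyclic derivative computation gives $d(\beta^*)=\partial_\beta W=(-1)^{|\beta|}\omega_\beta\in KQ_{\Omega}\subseteq KQ'$. For part~(b) you take the same route as the paper, namely Keller's explicit description of the Calabi--Yau completion of a dg path algebra (concretely \cite[Proposition~6.6]{Keller11}), identifying the adjoined dual generators with the missing arrows of $\wb{Q}$ by their degrees. One small omission: that proposition only applies when the differential of $B$ satisfies the condition of \cite[\S3.6]{Keller11}; this does hold here, via the filtration $\varnothing\subseteq\Omega\subseteq Q'_1$ of the arrows, but it is a hypothesis that must be checked, not a formality.

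The genuine gap is that the step you explicitly defer --- matching the two differentials, with signs --- is the entire content of~(b), and your proposed shortcut would fail as stated. Keller's proposition returns the Ginzburg dg-algebra of $(\wb{Q'},W')$ with
$W'=\sum_{a}(-1)^{|a|}a^*\,d(a)=\sum_{\beta\in Q_1\setminus\Omega}(-1)^{m-1}(\beta^*)^*\omega_\beta$,
which under the relabelling $(\beta^*)^*\leftrightarrow\beta$ is $(-1)^{m-1}W$ rather than $W$. Consequently the comparison isomorphism must rescale every starred arrow by $(-1)^{m-1}$ --- including the arrows $\beta^*$ that already belong to $B$ --- and one checks that $d'(\beta^*)=(-1)^{m-1}d(\beta^*)$ forces this rescaling whenever $\omega_\beta\neq 0$. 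So for even $m$ there is no dg-morphism ``extending the inclusion $B\hookrightarrow\Gamma_{m+1}(Q,W)$'' that fixes generators in the way you describe; the map necessarily restricts to a nontrivial sign automorphism of $B$. This does not threaten the truth of the lemma, but it means the ``canonical morphism'' you plan to write down must have these signs built into its definition, and verifying that it intertwines the differentials on $\alpha^*$, on $\beta^*$, and on the loops $t_i$ (where the supercommutators $[\gamma,\gamma^*]$ also pick up signs) is precisely the computation your proposal leaves open.
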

\begin{proof}
The first claim holds since $d(\alpha)=0$ for any
$\alpha \in \Omega \subseteq Q_1$ and
$d(\beta^*) = \partial_\beta W = (-1)^{|\beta|} \omega_\beta \in K Q_{\Omega}
\subseteq KQ'$ for any $\beta \not \in \Omega$. This argument also shows
that the differential $d$ on $KQ'$ satisfies the condition
in~\cite[\S3.6]{Keller11} via the filtration
$\varnothing \subseteq \Omega \subseteq Q'$ of the set of arrows.
Hence we can use~\cite[Proposition~6.6]{Keller11} to compute the
$(m+1)$-Calabi-Yau completion of $B$ and get that it is isomorphic to
$(K\wb{Q'},d')$ with the differential $d'$ given on the generators by
\begin{align*}
d'(\alpha) = \partial_{\alpha^*} W' = 0 
&,& d'((\beta^*)^*) = \partial_{\beta^*} W' = 0
&,& d'(\alpha^*) = \partial_\alpha W'
&,& d'(\beta^*) = \partial_{(\beta^*)^*} W'
\end{align*}
and $d(t_i) = (-1)^{m+1} e_i (\sum_{\gamma \in Q'_1} [\gamma, \gamma^*]) e_i$,
where $W' \in K\wb{Q'}$ is the element
\[
W' = \sum_{\alpha \in \Omega} (-1)^{|\alpha|} \alpha^* d(\alpha)
+ \sum_{\beta \in Q_1 \setminus \Omega} (-1)^{|\beta^*|} (\beta^*)^* d(\beta^*)
= \sum_{\beta \in Q_1 \setminus \Omega} (-1)^{m-1} (\beta^*)^* \omega_\beta .
\]

Finally, the isomorphism $\varphi \colon K\wb{Q'} \to K\wb{Q}$ defined on the
generators by
\[
\varphi(\gamma) = \begin{cases}
(-1)^{m-1} \gamma & \text{if $\gamma=\alpha^*$ for some $\alpha \in Q_1$}, \\
\beta & \text{if $\gamma=(\beta^*)^*$ for some
$\beta \in Q_1 \setminus \Omega$}, \\
\gamma & \text{otherwise}
\end{cases}
\]
induces an isomorphism $(K\wb{Q'},d') \cong \Gamma_{m+1}(Q,W)$ of dg-algebras.
\end{proof}

\subsection{Non-positively graded Ginzburg dg-algebras}
In this section we restrict attention to Ginzburg dg-algebras which are
concentrated in non-positive degrees and quote the construction of
Guo~\cite{Guo11} (generalizing that of Amiot in~\cite{Amiot09} for the case
$m=2$) of the generalized cluster category associated to a quiver with
superpotential.

Let $Q$ be a graded quiver and let $w \in \bZ$.
Denote by $Q^{(w)}$ the subquiver of $Q$ consisting of the arrows of
degree $w$, in other words, $Q^{(w)}_0 = Q_0$ and
$Q^{(w)}_1 = \{ \alpha \in Q_1 \,:\, |\alpha|=w \}$.

Now fix a graded quiver $Q$ and let $m \in \bZ$.
Let $\wb{Q}$ denote the quiver constructed in Definition~\ref{def:Ginzburg}.
Fix a homogeneous superpotential $W$ on $Q$ of degree $2-m$ and let
$\Gamma=\Gamma_{m+1}(Q,W)$ be the Ginzburg dg-algebra.
We immediately observe:
\begin{remark}
$\Gamma^i=0$ for any $i>0$ if and only if all the arrows of $\wb{Q}$ have
non-positive degrees. This condition implies that $\hh^i(\Gamma)=0$ for any
$i>0$.
\end{remark}

\begin{lemma} \label{l:h0}
Assume that all the arrows of $\wb{Q}$ have non-positive degrees. Then
$\hh^0(\Gamma) \cong K{\wb{Q}}^{(0)}/(d\alpha : \alpha \in \wb{Q}^{(-1)}_1)$.
\end{lemma}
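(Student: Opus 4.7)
The plan is to compute $\hh^0(\Gamma) = \ker(d\colon \Gamma^0 \to \Gamma^1)/d(\Gamma^{-1})$ by identifying $\Gamma^0$ and $\Gamma^{-1}$ explicitly and then showing that $d(\Gamma^{-1})$ is precisely the two-sided ideal of $K\wb{Q}^{(0)}$ generated by $\{d\alpha : \alpha \in \wb{Q}^{(-1)}_1\}$.

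First I would unpack the graded pieces. Because all arrows of $\wb{Q}$ sit in non-positive degrees, a path in $\wb{Q}$ has total degree equal to the sum of the degrees of its arrows, each of which is $\leq 0$. Hence $\Gamma^1 = 0$ (so $\ker d|_{\Gamma^0} = \Gamma^0$), and $\Gamma^0 = K\wb{Q}^{(0)}$ because a path has degree $0$ iff every arrow it uses has degree $0$. Similarly, a path has degree $-1$ iff exactly one arrow has degree $-1$ and all other arrows have degree $0$; therefore $\Gamma^{-1}$ is spanned as a $K$-vector space by elements of the form $p \, \alpha \, q$, where $p, q$ are paths in $\wb{Q}^{(0)}$ and $\alpha \in \wb{Q}^{(-1)}_1$.

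Next I would compute $d$ on such generators via the graded Leibniz rule
\[
d(p \, \alpha \, q) = d(p) \, \alpha \, q + (-1)^{|p|} p \, d(\alpha) \, q + (-1)^{|p| + |\alpha|} p \, \alpha \, d(q).
\]
Since $p, q \in \Gamma^0$, their differentials $d(p), d(q)$ would live in $\Gamma^1 = 0$, so both boundary terms vanish and $d(p \, \alpha \, q) = p \cdot d(\alpha) \cdot q$. (As a sanity check, one sees directly from Definition~\ref{def:Ginzburg} that the generators of $\wb{Q}^{(0)}$ are either original arrows $\alpha \in Q_1$ with $|\alpha|=0$, whose differential is zero by definition, or variables $\alpha^*, t_i$ whose differentials would have strictly positive degrees and therefore vanish.)

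Consequently $d(\Gamma^{-1})$ is exactly the two-sided ideal of $\Gamma^0 = K\wb{Q}^{(0)}$ generated by the set $\{d\alpha : \alpha \in \wb{Q}^{(-1)}_1\}$, which yields the claimed description of $\hh^0(\Gamma)$. The only point requiring any care is verifying that nothing outside of $\Gamma^{-1}$ can contribute to $d(\Gamma^{-1})$ and that the elements $p \, \alpha \, q$ really span $\Gamma^{-1}$; both are immediate from the hypothesis that every arrow of $\wb{Q}$ has non-positive degree, so I do not anticipate a real obstacle beyond bookkeeping.
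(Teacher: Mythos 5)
Your proof is correct and follows essentially the same route as the paper: identify $\Gamma^0$ with $K\wb{Q}^{(0)}$ and $\Gamma^{-1}$ with the span of $u\alpha v$ for $\alpha \in \wb{Q}^{(-1)}_1$ and $u,v$ of degree $0$, then observe $d(u\alpha v)=u(d\alpha)v$ so the image of $d$ is the stated two-sided ideal. Your extra observations ($\Gamma^1=0$, the Leibniz-rule justification) are just the details the paper leaves implicit.
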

\begin{proof}
Since there are no arrows of $\wb{Q}$ of positive degree, the graded piece
$\Gamma^0$ equals the path algebra of $\wb{Q}^{(0)}$ and the graded piece
$\Gamma^{-1}$ is spanned by the elements of the form $u \alpha v$ where
$u,v \in K{\wb{Q}}^{(0)}$ and $\alpha \in \wb{Q}^{(-1)}_1$. As $d(u \alpha v) = u(d\alpha)v$, the claim follows.
\end{proof}

\begin{lemma} \label{l:wbQnonpos}
The following conditions are equivalent:
\begin{enumerate}
\renewcommand{\theenumi}{\alph{enumi}}
\item
Each arrow of $\wb{Q}$ has non-positive degree;

\item
$m \geq 0$ and for any $\alpha \in Q_1$ one has
$1-m \leq |\alpha|$ and $|\alpha| \leq 0$.
\end{enumerate}
\end{lemma}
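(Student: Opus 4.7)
The plan is a direct case analysis on the three families of arrows composing $\wb{Q}$, using their explicit degrees from Definition~\ref{def:Ginzburg}. Recall that $\wb{Q}_1$ consists of: (1) the arrows of $Q$, keeping their degrees; (2) for each $\alpha \in Q_1$, a dual arrow $\alpha^*$ of degree $1-m-|\alpha|$; and (3) for each $i \in Q_0$, a loop $t_i$ of degree $-m$. The equivalence to be proved is just the translation of non-positivity of each of these three degrees into conditions on $m$ and on $|\alpha|$.

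For the implication (a)$\Rightarrow$(b), I would read off three inequalities: the loop $t_i$ has degree $-m$, so $-m \leq 0$, giving $m \geq 0$; for each $\alpha \in Q_1$, $|\alpha| \leq 0$ directly; and since $\alpha^*$ has degree $1-m-|\alpha|\leq 0$, one gets $|\alpha| \geq 1-m$. (Here I implicitly use that $Q_0$ is nonempty, which is assumed of a quiver; if one wants to be careful about the degenerate case $Q_0 = \varnothing$, the condition $m \geq 0$ becomes vacuous and can simply be stipulated.)

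For the converse (b)$\Rightarrow$(a), I would just check each family in turn: arrows $\alpha \in Q_1$ have degree $|\alpha| \leq 0$ by hypothesis; loops $t_i$ have degree $-m \leq 0$ since $m \geq 0$; and arrows $\alpha^*$ have degree $1-m-|\alpha| \leq 0$ because $|\alpha| \geq 1-m$.

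There is no real obstacle here; the statement is a bookkeeping lemma that simply records when the Ginzburg dg-algebra is concentrated in non-positive degrees, so that Lemma~\ref{l:h0} and the hypotheses of Theorem~\ref{t:cluster} become applicable. The only point requiring a small remark is the asymmetry between the bounds $1-m \leq |\alpha|$ and $|\alpha| \leq 0$: the upper bound comes from $\alpha$ itself, while the lower bound comes from its Koszul dual $\alpha^*$, reflecting that the pair $(\alpha,\alpha^*)$ contributes degrees summing to $1-m$.
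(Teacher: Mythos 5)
Your proof is correct and follows essentially the same route as the paper's: a direct case analysis on the three families of arrows of $\wb{Q}$ (the $\alpha$, the $\alpha^*$, and the loops $t_i$), translating non-positivity of their degrees $|\alpha|$, $1-m-|\alpha|$ and $-m$ into the stated inequalities. The remark about the degenerate case $Q_0=\varnothing$ is a harmless extra precaution that the paper does not bother with.
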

\begin{proof}

If $m<0$ then the each of the loops $t_i$ in $\wb{Q}$ has positive degree $-m$.
In addition, each other arrow of $\wb{Q}$ is either $\alpha$ or $\alpha^*$ for
some arrow $\alpha \in Q_1$, and its degree is $|\alpha|$ or $1-m-|\alpha|$,
respectively. These observations imply the statement of the lemma.
\end{proof}

In the next two examples we discuss the cases $m=0$ and $m=1$.

\begin{example}
Assume that $m=0$. Lemma~\ref{l:wbQnonpos} implies that the arrows of $\wb{Q}$
have non-positive degrees if and only if $Q$ has no arrows. In this case
$\wb{Q}$ is a disjoint union of graded quivers of the form
\[
\xymatrix{
{\bullet} \ar@{-}@(ur,dr)[]^{t}
}
\]
with $|t|=0$ and $dt=0$. Hence $\Gamma$ is concentrated in degree $0$ and it
is a finite direct product of polynomial rings $k[t]$. The algebra $k[t]$ is
$1$-Calabi-Yau, see~\cite[\S4.2]{Keller08}
\end{example}

\begin{example}
Assume that $m=1$. Lemma~\ref{l:wbQnonpos} implies that the arrows of $\wb{Q}$
have non-positive degrees if and only if all the arrows of $Q$ are in degree
$0$, so we can regard $Q$ as an ungraded quiver. Since the superpotential on
$Q$ is homogeneous of degree $1$, it must vanish.
In this case all the arrows $\alpha$ and $\alpha^*$ of $\wb{Q}$ are
in degree $0$ and the differential of each loop $t_i$, whose degree is $-1$,
is given by $d(t_i) = e_i (\sum_{\alpha \in Q_1} [\alpha,\alpha^*]) e_i$.
By Lemma~\ref{l:h0}, $\hh^0(\Gamma)$ is isomorphic to the
\emph{preprojective algebra} of the quiver $Q$.
\end{example}

When $m \geq 2$, the next lemma shows that
we may assume that $Q^{(1-m)}$ has no arrows.

\begin{lemma}
Assume that $m \geq 2$ and that $1-m \leq |\alpha| \leq 0$ for any
$\alpha \in Q_1$. Then there exist a graded quiver $Q'$ with
$2-m \leq |\alpha'| \leq 0$ for each $\alpha' \in Q'_1$
and a homogeneous superpotential $W'$ on $Q'$ of degree $2-m$ such that
$\Gamma_{m+1}(Q,W) \cong \Gamma_{m+1}(Q',W')$.
\end{lemma}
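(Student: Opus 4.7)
The plan is to apply Lemma~\ref{l:reparrow} to each arrow of $Q$ of degree $1-m$ in order to dualize it into an arrow of degree $1-m-(1-m)=0$; after doing so, the remaining arrows all have degrees in $\{2-m,\ldots,0\}$, as required. The nontrivial point is verifying the hypothesis of Lemma~\ref{l:reparrow}, namely that no term of $W$ involves an arrow of degree $1-m$.

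Let $\Omega = \{\alpha \in Q_1 : |\alpha| = 1-m\}$ and fix $\alpha \in \Omega$. A cycle in $Q$ containing $\alpha$ can be written, up to cyclic permutation, in the form $\alpha w$ where $w$ is a path from $t(\alpha)$ to $s(\alpha)$ (with $w=e_{s(\alpha)}$ allowed when $\alpha$ is a loop). Its total degree equals $|\alpha|+|w|=(1-m)+|w|$. If such a cycle were a term of the superpotential $W$, which has degree $2-m$, then $|w|$ would have to equal $1$; but by hypothesis every arrow of $Q$ has non-positive degree, so $|w|\leq 0$, a contradiction. Hence no term of $W$ involves any arrow of $\Omega$.

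Consequently Lemma~\ref{l:reparrow} applies to every $\alpha \in \Omega$, iteratively (the hypothesis is preserved, since reinterpreting an arrow of $\Omega$ as a dual arrow does not introduce arrows of $\Omega$ into other terms of $W$). This produces a graded quiver $Q'$ with $Q'_0=Q_0$ and $Q'_1=(Q_1\setminus\Omega)\cup\{\alpha^* : \alpha \in \Omega\}$, where each new arrow $\alpha^*$ has degree $0$ and each unreplaced arrow keeps its original degree, necessarily in $\{2-m,\ldots,0\}$. The element $W$, now read in $KQ'$, defines a homogeneous superpotential $W'$ on $Q'$ of the same degree $2-m$, and Lemma~\ref{l:reparrow} delivers the required isomorphism $\Gamma_{m+1}(Q,W)\cong \Gamma_{m+1}(Q',W')$. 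The main (and only) point of care is the degree bookkeeping; no serious obstacle arises.
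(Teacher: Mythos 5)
Your proof is correct and follows essentially the same route as the paper: observe that a term of $W$ containing an arrow of degree $1-m$ would force the remaining arrows in that cycle to have total degree $1>0$, which is impossible, and then apply Lemma~\ref{l:reparrow} iteratively to each arrow of $Q^{(1-m)}$. Your write-up merely spells out the degree bookkeeping and the preservation of the hypothesis under iteration in slightly more detail than the paper does.
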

\begin{proof}
We define the graded quiver $Q'$ as a subquiver of $\wb{Q}$; 
we set $Q'_0=Q_0$ and
\[
Q'^{(w)}_1 = \begin{cases}
Q_1^{(0)} \cup \{\alpha^* : \alpha \in Q_1^{(1-m)}\} &
\text{if $w=0$,} \\
Q_1^{(w)} & \text{if $1-m < w < 0$,}
\end{cases}
\]
with $Q'^{(w)}_1$ empty for any other $w \in \bZ$. Then
$2-m \leq |\alpha'| \leq 0$ for any arrow $\alpha' \in Q'_1$ by construction.
Moreover, since the superpotential $W$ is of degree $2-m$ and all the arrows of
$Q$ have non-positive degrees, no term of $W$ can contain any arrows of
$Q^{(1-m)}$. The result now follows by iterated application of
Lemma~\ref{l:reparrow} for each of the arrows in $Q^{(1-m)}$.
\end{proof}

In particular, when $m=2$ one can always reduce to the classical setting of
an ungraded quiver with potential.

\begin{cor}
Let $Q$ be a graded quiver such that $-1 \leq |\alpha| \leq 0$ for any
$\alpha \in Q_1$ and let $W$ be a homogeneous superpotential on $Q$ of degree
$0$. Then there exist a quiver $Q'$ concentrated in degree $0$ and a
superpotential $W'$ on $Q'$ such that $\Gamma_3(Q,W) \cong \Gamma_3(Q',W')$.
\end{cor}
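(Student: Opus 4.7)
The plan is to obtain the corollary as an immediate specialization of the preceding lemma at $m=2$. First I would check that the hypotheses match: the condition $1-m \leq |\alpha| \leq 0$ for every $\alpha \in Q_1$ becomes $-1 \leq |\alpha| \leq 0$, which is exactly what the corollary assumes, and the requirement that $W$ be homogeneous of degree $2-m$ becomes $|W|=0$, again matching the hypothesis. Then I would read off the conclusion at $m=2$: the lemma produces a graded quiver $Q'$ with $2-m \leq |\alpha'| \leq 0$ for each $\alpha' \in Q'_1$, which forces $|\alpha'|=0$, so $Q'$ is concentrated in degree $0$ and may be regarded as an ordinary ungraded quiver; together with a homogeneous superpotential $W'$ on $Q'$ of degree $2-m=0$, i.e.\ an ordinary superpotential. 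The isomorphism $\Gamma_{m+1}(Q,W) \cong \Gamma_{m+1}(Q',W')$ furnished by the lemma is then precisely the desired $\Gamma_3(Q,W) \cong \Gamma_3(Q',W')$.

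Concretely, the procedure behind the preceding lemma is to apply Lemma~\ref{l:reparrow} iteratively to each arrow of degree $1-m = -1$. The input needed is that no term of $W$ contains such an arrow, and for $m=2$ this is transparent: since $W$ is a sum of cycles of total degree $0$ and every arrow of $Q$ has non-positive degree, any cycle containing an arrow of degree $-1$ would require the remaining arrows to contribute a positive total degree, which is impossible. Hence Lemma~\ref{l:reparrow} applies to each arrow $\alpha \colon i \to j$ of degree $-1$ in turn, flipping it to $\alpha^* \colon j \to i$ of degree $1-m-|\alpha|=0$; after all flips, all arrows lie in degree $0$.

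There is no genuine obstacle: the corollary is essentially a restatement of the preceding lemma at $m=2$, and the only content beyond matching hypotheses is the arithmetic observation that for this value of $m$ the upper and lower bounds on $|\alpha'|$ coincide, collapsing $Q'$ to an ungraded quiver.
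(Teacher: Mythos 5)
Your proposal is correct and matches the paper's (implicit) argument exactly: the corollary is the case $m=2$ of the immediately preceding lemma, where the bounds $2-m \leq |\alpha'| \leq 0$ collapse to force all arrows of $Q'$ into degree $0$. Your additional check that no term of $W$ can contain a degree $-1$ arrow (so that the arrow-flipping lemma applies) is the same degree-counting observation the paper uses in the lemma's proof.
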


The next lemma generalizes ~\cite[Lemma~2.11]{KellerYang11}.

\begin{lemma} \label{l:h0Q}
Assume that $m \geq 2$ and $2-m \leq |\alpha| \leq 0$ for any $\alpha \in Q_1$.
Then
\[
\hh^0(\Gamma_{m+1}(Q,W)) \cong
KQ^{(0)}/(\partial_\alpha W : \alpha \in Q^{(2-m)}_1) .
\]
\end{lemma}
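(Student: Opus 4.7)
The plan is to reduce the statement to Lemma~\ref{l:h0} by carefully identifying the arrows of $\wb{Q}$ in degrees $0$ and $-1$ under the hypothesis. First, I would verify that Lemma~\ref{l:h0} applies: since $m \geq 2$ and $2-m \leq |\alpha| \leq 0$ for each $\alpha \in Q_1$, Lemma~\ref{l:wbQnonpos} guarantees that every arrow of $\wb{Q}$ has non-positive degree, so
\[
\hh^0(\Gamma_{m+1}(Q,W)) \cong K\wb{Q}^{(0)}/(d\alpha : \alpha \in \wb{Q}^{(-1)}_1) .
\]

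Next, I would identify $\wb{Q}^{(0)}$. The arrows of $\wb{Q}$ come in three types: the original $\alpha \in Q_1$ of degree $|\alpha|$; the dual arrow $\alpha^*$ of degree $1-m-|\alpha|$; and the loop $t_i$ of degree $-m$. Using $m \geq 2$ and $|\alpha| \leq 0$, we have $1-m-|\alpha| \leq 1-m \leq -1$, so no $\alpha^*$ sits in degree $0$; and $-m \leq -2$, so no $t_i$ sits in degree $0$. Hence the only degree-$0$ arrows of $\wb{Q}$ are those of $Q^{(0)}$, giving $K\wb{Q}^{(0)} = KQ^{(0)}$.

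I would then analyze $\wb{Q}^{(-1)}_1$ similarly. The loops $t_i$ sit in degree $-m \leq -2$, so none appear in degree $-1$. An original arrow $\alpha$ lies in $\wb{Q}^{(-1)}_1$ precisely when $|\alpha| = -1$ (possible only when $m \geq 3$), and then $d\alpha = 0$ by definition of the Ginzburg differential. A dual arrow $\alpha^*$ lies in $\wb{Q}^{(-1)}_1$ precisely when $1-m-|\alpha| = -1$, i.e.\ $|\alpha| = 2-m$, i.e.\ $\alpha \in Q^{(2-m)}_1$, and in that case $d\alpha^* = \partial_\alpha W$. Consequently the ideal $(d\alpha : \alpha \in \wb{Q}^{(-1)}_1)$ of $KQ^{(0)}$ collapses to $(\partial_\alpha W : \alpha \in Q^{(2-m)}_1)$, and substituting into the formula from Lemma~\ref{l:h0} gives exactly the claimed isomorphism.

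There is no real obstacle; the argument is a careful bookkeeping of degrees. The only place where one must be a bit attentive is the case $m = 3$, where $Q^{(-1)}_1$ and $Q^{(2-m)}_1$ coincide, so each $\alpha \in Q^{(2-m)}_1$ contributes both $\alpha$ (with $d\alpha = 0$, harmless) and $\alpha^*$ (with $d\alpha^* = \partial_\alpha W$) to the generators of the relevant ideal; the final description of the ideal is unaffected.
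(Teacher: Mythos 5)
Your argument is correct and follows essentially the same route as the paper: apply Lemma~\ref{l:h0} (justified via Lemma~\ref{l:wbQnonpos}) and then do the degree bookkeeping to identify $\wb{Q}^{(0)}=Q^{(0)}$ and $\wb{Q}^{(-1)}_1 = Q^{(-1)}_1 \cup \{\alpha^* : \alpha \in Q^{(2-m)}_1\}$, noting $d\alpha=0$ for original arrows and $d\alpha^*=\partial_\alpha W$. Your extra remark about the $m=3$ overlap is a harmless refinement the paper leaves implicit.
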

\begin{proof}
Observe that since the arrows of $Q$ have non-positive degrees and $W$ is
of degree $2-m$, the cyclic derivative $\partial_\alpha W$ with respect
to any arrow $\alpha$ of degree $2-m$ lies in the path algebra of
$Q^{(0)}$ and the quotient in the right hand side makes sense.

The arrows of $\wb{Q}$ have non-positive degrees by Lemma~\ref{l:wbQnonpos},
hence we may apply Lemma~\ref{l:h0} to deduce that
$\hh^0(\Gamma_{m+1}(Q,W)) \cong K \wb{Q}^{(0)}/(d \alpha : 
\alpha \in \wb{Q}_1^{(-1)})$. 
Observe that $\wb{Q}^{(0)} = Q^{(0)}$ since $Q_1^{(1-m)}$ is empty by
assumption and
$\wb{Q}^{(-1)}_1 = Q^{(-1)}_1 \cup \{\alpha^* : \alpha \in Q^{(2-m)}_1 \}$
with $d(\alpha)=0$ and $d(\alpha^*) = \partial_\alpha W$ for any
$\alpha \in Q_1$, hence the claim follows.
\end{proof}

\begin{theorem}[\protect{\cite[Theorem~3.3]{Guo11}}] \label{t:mCY}
Let $m \geq 1$ be an integer, let $Q$ be a graded quiver,
let $W$ be a superpotential on $Q$ of degree $2-m$ and denote by
$\Gamma = \Gamma_{m+1}(Q,W)$ the Ginzburg dg-algebra.
Assume that:
\begin{enumerate}
\renewcommand{\theenumi}{\roman{enumi}}
\item \label{it:t:deg}
$1 - m \leq |\alpha| \leq 0$ for all $\alpha \in Q_1$;
\item \label{it:t:fd}
$\hh^0(\Gamma)$ is finite-dimensional.
\end{enumerate}

Then the triangulated category
$\cC_{(Q,W)} = \per \Gamma / \cD_{\fd}(\Gamma)$ is
$\Hom$-finite and $m$-Calabi-Yau. Moreover, the image of $\Gamma$
in $\cC_{(Q,W)}$ is an $m$-cluster-tilting object whose endomorphism algebra
is isomorphic to $\hh^0(\Gamma)$.
\end{theorem}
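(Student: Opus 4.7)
The plan is to deduce Theorem~\ref{t:mCY} as a direct application of Theorem~\ref{t:cluster} to the Ginzburg dg-algebra $\Gamma = \Gamma_{m+1}(Q,W)$, so the work reduces to verifying that the four hypotheses of Theorem~\ref{t:cluster} hold under assumptions~\eqref{it:t:deg} and~\eqref{it:t:fd}.

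First, I would invoke Theorem~\ref{t:GinzburgCY}: since $W$ is homogeneous of degree $2-m$ on the graded quiver $Q$, the dg-algebra $\Gamma_{m+1}(Q,W)$ is homologically smooth and bimodule $(m+1)$-Calabi-Yau. This takes care of conditions~(i) and~(iv) of Theorem~\ref{t:cluster}. Next, I would use hypothesis~\eqref{it:t:deg} together with Lemma~\ref{l:wbQnonpos} to conclude that every arrow of $\wb{Q}$ has non-positive degree. By the remark preceding Lemma~\ref{l:h0}, this immediately forces $\Gamma^i = 0$ for all $i>0$, and hence $\hh^i(\Gamma)=0$ for all $i>0$, giving condition~(ii) of Theorem~\ref{t:cluster}. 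Condition~(iii) is precisely hypothesis~\eqref{it:t:fd}.

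Having verified all four conditions, I can apply Theorem~\ref{t:cluster} directly. Part~(a) of that theorem yields that $\cC_{(Q,W)} = \per \Gamma / \cD_{\fd}(\Gamma)$ is $\Hom$-finite and $m$-Calabi-Yau, while part~\eqref{it:t:CT} yields that the image $\pi\Gamma$ of $\Gamma$ in $\cC_{(Q,W)}$ is an $m$-cluster-tilting object with
\[
\End_{\cC_{(Q,W)}}(\pi\Gamma) \cong \hh^0(\Gamma),
\]
which is exactly the statement to be proved.

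There is no real obstacle here: the theorem is essentially a packaging result. The only thing to be careful about is making sure that the degree hypothesis~\eqref{it:t:deg} is used correctly, that is, confirming via Lemma~\ref{l:wbQnonpos} that the constraint $1-m \leq |\alpha| \leq 0$ on arrows of $Q$ translates to non-positivity of all degrees in $\wb{Q}$ (which in turn requires $m \geq 1$, matching the hypothesis of the theorem). Once this translation is in place, every other ingredient is supplied by the cited results.
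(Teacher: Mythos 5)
Your proposal is correct and follows essentially the same route as the paper: both verify the four hypotheses of Theorem~\ref{t:cluster} by combining Theorem~\ref{t:GinzburgCY} with the degree condition (via Lemma~\ref{l:wbQnonpos}) and the finite-dimensionality assumption, then invoke Theorem~\ref{t:cluster}. Your write-up merely makes the degree bookkeeping slightly more explicit than the paper does.
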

\begin{proof}
The condition~\eqref{it:t:deg} implies that $\hh^i(\Gamma)=0$ for all
$i>0$. Now Theorem~\ref{t:GinzburgCY} and condition~\eqref{it:t:fd}
imply that $\Gamma$ satisfies the conditions of Theorem~\ref{t:cluster}
and the result is now a consequence of that theorem.
\end{proof}

\section{The construction}

\subsection{Superpotentials from quivers with relations}

In this section we construct, given a quiver $Q$, a finite sequence $R$ of
relations on $Q$ and an integer $m \geq 2$, a graded quiver with homogeneous
superpotential of degree $2-m$. The construction generalizes that of
Keller in~\cite[\S6.9]{Keller11} for the case where $m=2$ and $KQ/(R)$ has
global dimension $2$. 
The idea of adding, for each relation, an arrow in the opposite direction
appears already in the description of relation-extension algebras by
Assem, Br\"{u}stle and Schiffler~\cite{ABS08}.

\begin{defn}
Let $Q$ be a quiver and let $\fr$ be the ideal of $KQ$ generated by all
the arrows.
A \emph{relation} on $Q$ is an element of $e_i \fr e_j$ for some $i, j \in Q_0$.
In other words, a relation is a linear combination of paths of positive lengths
starting at $i$ and ending at~$j$.
\end{defn}

We start with some preparations concerning split extensions of algebras which
will be needed for the case $m=2$.

\begin{defn}
An algebra $\wt{A}$ is a \emph{split extension} of an algebra $A$
if there exist algebra homomorphisms $\iota \colon A \to \wt{A}$ and
$\pi \colon \wt{A} \to A$ such that $\pi \iota = id_A$.
\end{defn}

Let $Q$ be a quiver and let $\wt{Q}$ be a quiver such that
$\wt{Q}_0=Q_0$ and $Q_1 \subseteq \wt{Q}_1$ (in other words, $\wt{Q}$ is
obtained from $Q$ by adding arrows). Then the path algebra $K\wt{Q}$ is a split
extension of the path algebra $KQ$. Indeed, there are algebra homomorphisms
\[
KQ \xrightarrow{\iota_{Q,\wt{Q}}} K\wt{Q} \xrightarrow{\pi_{\wt{Q},Q}} KQ
\]
whose values on the generators are given by
\begin{align*}
\iota_{Q,\wt{Q}}(\alpha) = \alpha \quad (\alpha \in Q_1)
\qquad \text{and} \qquad
\pi_{\wt{Q},Q}(\alpha) = \begin{cases}
\alpha & \text{if $\alpha \in Q_1$,} \\
0 & \text{if $\alpha \in \wt{Q}_1 \setminus Q_1$.}
\end{cases}
\end{align*}

Denote by $\fr'$ the ideal of $K\wt{Q}$ generated by the arrows in the set
$\wt{Q}_1 \setminus Q_1$.
Let $R$ be a set of relations in $KQ$ and let $\wt{R}$ be a set of
relations in $K\wt{Q}$ such that $R \subseteq \wt{R}$ (via the natural
embedding $\iota_{Q,\wt{Q}} \colon KQ \hookrightarrow K\wt{Q}$).

\begin{lemma} \label{l:splitext}
Assume that $\wt{R} \setminus R \subseteq \fr'$.
Then the algebra $K\wt{Q}/(\wt{R})$ is a split extension of 
the algebra $KQ/(R)$.
\end{lemma}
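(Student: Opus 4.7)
The plan is to verify that the splitting data $(\iota_{Q,\wt{Q}}, \pi_{\wt{Q},Q})$ between the path algebras descends to a splitting between the quotients. Concretely, the job is to check that both homomorphisms send the relevant ideal of relations into the appropriate ideal on the other side; the splitting identity $\pi_{\wt{Q},Q} \circ \iota_{Q,\wt{Q}} = \id_{KQ}$ then descends for free.

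First, since $R \subseteq \wt{R}$, the embedding $\iota_{Q,\wt{Q}}$ sends the generators $R$ inside $\wt{R}$, and hence sends the two-sided ideal $(R)$ of $KQ$ into the two-sided ideal $(\wt{R})$ of $K\wt{Q}$. Thus it induces an algebra homomorphism $\wb{\iota} \colon KQ/(R) \to K\wt{Q}/(\wt{R})$.

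The main point is the map in the other direction. The projection $\pi_{\wt{Q},Q}$ sends every arrow in $\wt{Q}_1 \setminus Q_1$ to zero; being an algebra homomorphism, it therefore annihilates every path of $\wt{Q}$ involving at least one such arrow, so $\pi_{\wt{Q},Q}(\fr') = 0$. Combined with the hypothesis $\wt{R} \setminus R \subseteq \fr'$ and with $\pi_{\wt{Q},Q}(r) = r$ for $r \in R$, this gives $\pi_{\wt{Q},Q}(\wt{R}) \subseteq R$, and therefore $\pi_{\wt{Q},Q}((\wt{R})) \subseteq (R)$. Hence $\pi_{\wt{Q},Q}$ induces a well-defined algebra homomorphism $\wb{\pi} \colon K\wt{Q}/(\wt{R}) \to KQ/(R)$, and $\wb{\pi} \circ \wb{\iota} = \id_{KQ/(R)}$ is immediate. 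There is no real obstacle here: the content of the lemma is simply that the hypothesis $\wt{R} \setminus R \subseteq \fr'$ is precisely what is needed to make $\pi_{\wt{Q},Q}$ compatible with the ideals of relations.
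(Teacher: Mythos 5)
Your proof is correct and follows essentially the same route as the paper: both arguments check that $\iota_{Q,\wt{Q}}$ and $\pi_{\wt{Q},Q}$ carry the respective ideals of relations into one another (using $R \subseteq \wt{R}$ in one direction and $\wt{R}\setminus R \subseteq \fr'$ together with $\pi_{\wt{Q},Q}(\fr')=0$ in the other) and then observe that the splitting identity descends. The only cosmetic slip is the intermediate claim $\pi_{\wt{Q},Q}(\wt{R}) \subseteq R$, which should read $\pi_{\wt{Q},Q}(\wt{R}) \subseteq R \cup \{0\}$; the conclusion $\pi_{\wt{Q},Q}((\wt{R})) \subseteq (R)$ is unaffected.
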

\begin{proof}
Consider the composition
$KQ \xrightarrow{\iota_{Q,\wt{Q}}} K\wt{Q} \twoheadrightarrow K\wt{Q}/(\wt{R})$.
Since $R \subseteq \wt{R}$, the image of any relation $\rho \in R$ vanishes and
we get an algebra homomorphism $\iota \colon KQ/(R) \to K\wt{Q}/(\wt{R})$.
Consider now the composition
$K\wt{Q} \xrightarrow{\pi_{\wt{Q},Q}} KQ \twoheadrightarrow KQ/(R)$ and let
$\rho \in \wt{R}$. If $\rho \in R$, then its image obviously vanishes.
Otherwise, $\rho \not \in R$ and our assumption that $\wt{R} \setminus R
\subseteq \fr'$ implies that $\pi_{\wt{Q},Q}(\rho)=0$, so its image vanishes
as well. Hence we get a well defined algebra homomorphism
$\pi \colon K\wt{Q}/(\wt{R}) \to KQ/R$. The composition $\pi \iota$ maps
the image of any arrow $\alpha \in Q$ in $KQ/(R)$ to itself, therefore it
is the identity on $KQ/(R)$.
\end{proof}

Any quiver with a finite sequence of relations gives rise to a dg-algebra whose
underlying graded algebra is the path algebra of a graded quiver with arrows
concentrated in degrees $0$ and $-1$. The details are given in the construction
below.

\begin{constr} \label{con:dgQR}
Let $(Q,R)$ be a pair where $Q$ is a quiver and
$R = \bigcup_{i,j \in Q_0} R_{i,j}$, where each $R_{i,j}$ is a finite sequence
of relations inside $e_i \fr e_j$ and $R$ is the concatenation of these
sequences (there may be repetitions inside each sequence $R_{i,j}$ and moreover
the zero element can appear inside several such sequences).
For a relation $\rho \in R_{i,j}$, set $s(\rho)=i$ and $t(\rho)=j$.

We define a graded quiver $Q'$ as follows:
\begin{itemize}
\item
The set of vertices of $Q'$ equals that of $Q$;
\item
The set of arrows consists of
\begin{itemize}
\item
the arrows of $Q$, with their degree set to $0$;
\item
an arrow $\eta_\rho \colon s(\rho) \to t(\rho)$ of degree $-1$ for each
relation $\rho \in R$;
\end{itemize}
\end{itemize}
and denote by $B(Q,R)$ the dg-algebra whose underlying graded algebra is the
path algebra $KQ'$ with the differential acting on the generators by
\begin{itemize}
\item
$d(\alpha)=0$ for any $\alpha \in Q_1$;
\item
$d(\eta_\rho)=\rho$ for any $\rho \in R$.
\end{itemize}
\end{constr}

Obviously, $B(Q,R)$ is concentrated in non-positive degrees and $\hh^0(B(Q,R))$ is isomorphic to $KQ/(R)$, but in general $B(Q,R)$ is not quasi-isomorphic to its
zeroth homology.

\begin{remark}
If $B=(KQ',d)$ is any dg-algebra whose underlying graded algebra is the path
algebra of a graded quiver with arrows concentrated in degrees $0$ and
$-1$ and the image of the differential $d$ lies in the ideal generated by the
arrows, then $B \cong B(Q,R)$ for some $(Q,R)$. Indeed, we can take
$Q=Q'^{(0)}$ and for each $i,j \in Q_0$ let
$R_{i,j}$ be the list of $d(\alpha)$ where $\alpha$ runs over the arrows
in $Q'^{(-1)}_1$ starting at $i$ and ending at $j$.
\end{remark}

It turns out (see Lemma~\ref{l:preprojB} below) that for any $m \geq 2$, the
$(m+1)$-Calabi-Yau completion of $B(Q,R)$ is a Ginzburg dg-algebra of a graded
quiver with homogeneous superpotential of degree $2-m$ whose construction is 
described below.

\begin{constr} \label{con:QW}
Let $(Q,R,m)$ be a triple where $(Q,R)$ is as in Construction~\ref{con:dgQR}
and $m \geq 2$ is an integer.
We construct a graded quiver $\wt{Q}$ with homogeneous superpotential $W$ of
degree $2-m$ as follows.
\begin{itemize}
\item
The set of vertices of $\wt{Q}$ equals that of $Q$;

\item
The set of arrows of $\wt{Q}$ consists of
\begin{itemize}
\item
the arrows of $Q$, with their degree set to $0$;

\item
an arrow $\eps_\rho \colon t(\rho) \to s(\rho)$ of degree $2-m$
for each relation $\rho \in R$;
\end{itemize}

\item
The superpotential $W$ is the image of the element
$\sum_{\rho \in R} \eps_\rho \rho$ in $K\wt{Q}/[K\wt{Q},K\wt{Q}]$.
\end{itemize}

We denote by $\Gamma(Q,R,m)$ the Ginzburg dg-algebra $\Gamma_{m+1}(\wt{Q},W)$.
\end{constr}

\begin{remark}
The quiver with superpotential $(\wt{Q},W)$ depends on the particular choice
of the sequence $R$ and not only on the two-sided ideal $(R)$ it generates in
$KQ$, as we shall see in Example~\ref{ex:nonuniq}.
\end{remark}

For the rest of this section, we fix a triple $(Q,R,m)$ where $Q$ is a quiver,
$R$ is a finite sequence of relations on $Q$ and $m \geq 2$.
We denote by $(\wt{Q},W)$ the graded quiver with superpotential of degree
$2-m$ associated to $(Q,R,m)$ as in Construction~\ref{con:QW}, and by
$\Gamma = \Gamma(Q,R,m) = \Gamma_{m+1}(\wt{Q},W)$ its Ginzburg dg-algebra.
We denote the elements of $R$ by $\rho_1, \rho_2, \dots, \rho_n$ and
write $|R|=n$. For simplicity, we denote by $\eps_k$ the arrow $\eps_{\rho_k}$
of degree $2-m$ in $\wt{Q}$ corresponding to the relation $\rho_k$, so that
$W$ is the image of $\sum_{k=1}^n \eps_k \rho_k$ modulo $[K\wt{Q},K\wt{Q}]$.
Similarly, denote by $\eta_k$ the arrow $\eta_{\rho_k}$ and let
$B=B(Q,R)$ be the dg-algebra of Construction~\ref{con:dgQR}.

We start by describing the graded quiver $\wb{\wt{Q}}$ underlying $\Gamma$
occurring in Definition~\ref{def:Ginzburg}.
\begin{lemma} \label{l:QQarrows}
The arrows of the graded quiver $\wb{\wt{Q}}$, their degrees and their
differentials are as given in Table~\ref{tab:arrows}.
\end{lemma}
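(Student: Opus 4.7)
The plan is to unwind Definition~\ref{def:Ginzburg} applied to the graded quiver $(\wt{Q},W)$ produced by Construction~\ref{con:QW} and read off the data arrow by arrow. Since the arrows of $\wt{Q}$ are precisely the arrows $\alpha \in Q_1$ in degree $0$ together with the arrows $\eps_k$ in degree $2-m$ (for $k=1,\dots,n$), the three bullets of Definition~\ref{def:Ginzburg} force $\wb{\wt{Q}}$ to have exactly five families of arrows: the $\alpha$'s and $\eps_k$'s themselves, the dual arrows $\alpha^*\colon t(\alpha)\to s(\alpha)$ of degree $1-m-0=1-m$, the dual arrows $\eps_k^*\colon s(\rho_k)\to t(\rho_k)$ of degree $1-m-(2-m)=-1$, and the loops $t_i$ of degree $-m$. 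Listing these with their sources, targets, and degrees fills the first three columns of Table~\ref{tab:arrows}.

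The remaining content is the computation of the differentials. By definition $d(\alpha)=0$ and $d(\eps_k)=0$, and $d(t_i)=e_i\bigl(\sum_{\gamma\in\wt{Q}_1}[\gamma,\gamma^*]\bigr)e_i$ splits as a sum of $[\alpha,\alpha^*]$ (over $\alpha\in Q_1$) and $[\eps_k,\eps_k^*]$ (over $k$) conjugated by $e_i$. The interesting entries are $d(\alpha^*)=\partial_\alpha W$ and $d(\eps_k^*)=\partial_{\eps_k}W$. For the latter I observe that the arrow $\eps_k$ occurs in a unique term $\eps_k\rho_k$ of $W$; writing $\rho_k=\sum_j c_{kj}p_{kj}$ as a combination of paths of $Q$, each cycle $\eps_k p_{kj}$ contains $\eps_k$ exactly once, at position $\ell=1$, with $w=2-m$ and empty prefix, so formula~\eqref{e:deriv} degenerates to $\partial_{\eps_k}W = (-1)^{|\eps_k|}\rho_k = (-1)^m\rho_k$.

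For $d(\alpha^*)=\partial_\alpha W$ with $\alpha\in Q_1$, the computation proceeds by expanding each cycle $\eps_k p_{kj}$ and summing~\eqref{e:deriv} over all positions $\ell\geq 2$ at which $\alpha$ occurs in $p_{kj}$. Because every arrow before such an $\ell$ is either $\eps_k$ (of degree $2-m$) or another arrow of $Q$ (of degree $0$), the inner sign $(-1)^{(w-1)(|\alpha_1|+\dots+|\alpha_{\ell-1}|)}=(-1)^{(1-m)(2-m)}$ is constant, and the cyclic rotation sends $\eps_k p_{kj}=\eps_k\, u\alpha v$ to $v\eps_k u$. This expresses $\partial_\alpha W$ as the expected sum $\pm\sum_k \sum_{p_{kj}=u\alpha v} c_{kj}\, v\eps_k u$, matching the entry in Table~\ref{tab:arrows}.

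The main obstacle is simply sign bookkeeping in the cyclic derivative, particularly in $d(\alpha^*)$ where several arrows of $Q$ appear before the chosen occurrence of $\alpha$; the key simplification, which I will exploit systematically, is that only two degrees ($0$ and $2-m$) occur among arrows of $\wt{Q}$, so the exponents in~\eqref{e:deriv} reduce to multiples of $(1-m)(2-m)$ and the resulting signs are uniform across all terms. Once this is observed, filling in the table becomes a direct substitution into Definition~\ref{def:Ginzburg} with no further content.
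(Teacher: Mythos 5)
Your proposal is correct and follows essentially the same route as the paper: read the arrows and degrees off Definition~\ref{def:Ginzburg} applied to $(\wt{Q},W)$, and observe that the only nontrivial differential is $d(\eps_k^*)=\partial_{\eps_k}W=(-1)^{|\eps_k|}\rho_k=(-1)^m\rho_k$, since $\eps_k$ occurs only in the term $\eps_k\rho_k$ and there only with empty prefix. Your final paragraphs computing $\partial_\alpha W$ explicitly are superfluous (though harmless, and the uniform sign $(-1)^{(1-m)(2-m)}$ is indeed $+1$ since $(m-1)(m-2)$ is even), because Table~\ref{tab:arrows} records the differential of $\alpha^*$ simply as $\partial_\alpha W$ without evaluating it.
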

\begin{proof}
The description of the arrows and their degrees is evident from
Definition~\ref{def:Ginzburg}. For the differentials, note that
since none of the arrows $\eps_k$ occur in any $\rho \in R$,
we have 
\[
\partial_{\eps_k}W =
\partial_{\eps_k} \Bigl(\sum_{\ell=1}^{n} \eps_{\ell} \rho_{\ell} \Bigr) =
\partial_{\eps_k} (\eps_k \rho_k) = (-1)^{|\eps_k|} \rho_k =
(-1)^m \rho_k
\]
for any $1 \leq k \leq n$.
\end{proof}

\begin{table}
\[
\begin{array}{cccl}
\text{arrow} & \text{degree} & \text{differential} \\ \hline
\alpha   & 0   & 0 & (\alpha \in Q_1) \\
\eps_k^* & -1  & (-1)^m \rho_k & (1 \leq k \leq n) \\
\eps_k   & 2-m & 0 & (1 \leq k \leq n) \\
\alpha^* & 1-m & \partial_{\alpha} W & (\alpha \in Q_1) \\
t_i      & -m  & d(t_i) & (i \in Q_0)
\end{array}
\]
\caption{The arrows of the graded quiver $\wb{\wt{Q}}$ and their
differentials.}
\label{tab:arrows}
\end{table}

\begin{lemma} \label{l:H0}
Consider the algebras $A = KQ/(R)$ and $\wt{A} = \hh^0(\Gamma)$.
\begin{enumerate}
\renewcommand{\theenumi}{\alph{enumi}}
\item \label{it:l:mgt2}
If $m>2$ then $\wt{A} \cong A$.

\item \label{it:l:meq2}
If $m=2$ then $\wt{A}$ is a split extension of $A$.
\end{enumerate}
\end{lemma}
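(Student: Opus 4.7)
The proof proposal rests on combining the computation of $\hh^0(\Gamma)$ provided by Lemma~\ref{l:h0Q} with the split-extension criterion of Lemma~\ref{l:splitext} in the case $m=2$.

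First, I would verify that the graded quiver $\wt{Q}$ satisfies the hypothesis of Lemma~\ref{l:h0Q}: its arrows lie either in degree $0$ (the arrows of $Q$) or in degree $2-m$ (the arrows $\eps_k$), so $2-m \leq |\alpha| \leq 0$ for any $\alpha \in \wt{Q}_1$ whenever $m \geq 2$. Lemma~\ref{l:h0Q} then gives
\[
\wt{A} = \hh^0(\Gamma) \;\cong\; K\wt{Q}^{(0)}\big/\bigl(\partial_{\alpha} W \,:\, \alpha \in \wt{Q}^{(2-m)}_1\bigr).
\]
By Lemma~\ref{l:QQarrows}, $\partial_{\eps_k} W = (-1)^m \rho_k$ for $1 \leq k \leq n$.

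For part~\eqref{it:l:mgt2}, when $m > 2$, the degrees $0$ and $2-m$ are distinct, so $\wt{Q}^{(0)}_1 = Q_1$ and $\wt{Q}^{(2-m)}_1 = \{\eps_1,\dots,\eps_n\}$. Substituting into the above formula yields
\[
\wt{A} \;\cong\; KQ\big/\bigl((-1)^m \rho_k \,:\, 1 \leq k \leq n\bigr) \;=\; KQ/(R) \;=\; A,
\]
which settles this case.

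For part~\eqref{it:l:meq2}, when $m=2$, the arrows $\eps_k$ lie in degree $0$ as well, so $\wt{Q}^{(0)} = \wt{Q}^{(2-m)} = \wt{Q}$ is ungraded, and
\[
\wt{A} \;\cong\; K\wt{Q}\big/\bigl(\rho_1, \dots, \rho_n,\; \partial_{\alpha} W \,:\, \alpha \in Q_1\bigr).
\]
The plan is to apply Lemma~\ref{l:splitext} to the pair $Q \subseteq \wt{Q}$, with $R = \{\rho_1,\dots,\rho_n\}$ and $\wt{R} = R \cup \{\partial_\alpha W : \alpha \in Q_1\}$. The only nontrivial point to verify is $\wt{R} \setminus R \subseteq \fr'$, where $\fr'$ denotes the ideal of $K\wt{Q}$ generated by $\eps_1,\dots,\eps_n$. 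Since $W = \sum_{k} \eps_k \rho_k$ and each summand contains exactly one occurrence of an $\eps_k$, the cyclic derivative $\partial_\alpha W$ with respect to an arrow $\alpha \in Q_1$ distinct from all $\eps_k$ is a sum of paths each of which still contains some $\eps_k$. Hence $\partial_\alpha W \in \fr'$ and Lemma~\ref{l:splitext} immediately yields the desired split extension. The main technical point to be careful about is this last containment, but it follows transparently from formula~\eqref{e:deriv} and the shape of $W$.
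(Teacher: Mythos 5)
Your proof is correct and follows essentially the same route as the paper's: apply Lemma~\ref{l:h0Q} to compute $\hh^0(\Gamma)$, read off $\partial_{\eps_k}W = (-1)^m\rho_k$ for the case $m>2$, and for $m=2$ invoke Lemma~\ref{l:splitext} after checking that each $\partial_\alpha W$ with $\alpha\in Q_1$ lies in the ideal generated by the $\eps_k$. No gaps.
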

\begin{proof}
By Lemma~\ref{l:h0Q}, $\wt{A} \cong K\wt{Q}^{(0)}/(\partial_\alpha W :
\alpha \in \wt{Q}_1^{(2-m)})$.

If $m>2$ then $\wt{Q}^{(0)} = Q$, the arrows of $\wt{Q}^{(2-m)}$ are
$\eps_1, \eps_2, \dots, \eps_n$ and $\partial_{\eps_k} W = (-1)^m \rho_k$
according to Table~\ref{tab:arrows}.
Hence $\wt{A} \cong KQ/(\rho_1, \rho_2, \dots, \rho_n)$.
This shows part~\eqref{it:l:mgt2}.

If $m=2$ then all the arrows of the quiver $\wt{Q}$ have degree $0$, and we
can think of $\wt{Q}$ as an ungraded quiver consisting of the arrows of $Q$
and the arrows $\eps_k$ for $1 \leq k \leq n$. In other words, 
$\wt{Q} = \wt{Q}^{(0)}$ and $Q \subseteq \wt{Q}$ with
$\wt{Q} \setminus Q = \{\eps_1, \eps_2, \dots, \eps_n\}$.
Moreover, $\wt{A} = K\wt{Q}/(\wt{R})$ for $\wt{R} = \{\partial_\alpha W : 
\alpha \in \wt{Q}_1\}$.
Now $R \subseteq \wt{R}$ since $\partial_{\eps_k} W = \rho_k$ for each
$1 \leq k \leq n$. In addition, for any $\alpha \in Q_1$ the element
$\partial_\alpha W = \sum_{k=1}^n \partial_\alpha (\eps_k \rho_k)$ lies in the
ideal generated by $\eps_1, \eps_2, \dots, \eps_n$ in $\wt{Q}$.
Part~\eqref{it:l:meq2} now follows from Lemma~\ref{l:splitext}.
\end{proof}

The next two lemmas relate the dg-algebra $B$ and the Ginzburg dg-algebra
$\Gamma$. For a similar result in the case $m=2$,
see~\cite[\S6.7 and Proposition~6.8]{Keller11}.

\begin{lemma} \label{l:preprojB}
$\Gamma$ is the $(m+1)$-Calabi-Yau completion of $B$.
\end{lemma}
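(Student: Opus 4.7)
The plan is to apply Lemma~\ref{l:CYcompl} directly, with the right choice of arrow subset $\Omega \subseteq \wt{Q}_1$. The natural choice is $\Omega = Q_1$, i.e., the original arrows of $Q$ sitting inside $\wt{Q}$ in degree $0$. Then $\wt{Q}_1 \setminus \Omega = \{\eps_1, \dots, \eps_n\}$, and by construction
\[
W = \sum_{k=1}^{n} \eps_k \rho_k
\]
with each $\rho_k \in KQ = K\wt{Q}_\Omega$, which is exactly the hypothesis of Lemma~\ref{l:CYcompl} (with $\omega_{\eps_k} = \rho_k$).

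Let $Q''$ be the subquiver of $\wb{\wt{Q}}$ with $Q''_0 = Q_0$ and $Q''_1 = \Omega \cup \{\eps_k^* : 1 \leq k \leq n\}$. By Definition~\ref{def:Ginzburg}, the arrow $\eps_k^*$ has degree $1 - m - |\eps_k| = 1 - m - (2-m) = -1$, so $Q''$ has arrows exactly in degrees $0$ and $-1$. Reading off the differential of $\Gamma$ from Table~\ref{tab:arrows}, the induced differential on $(KQ'', d)$ satisfies $d(\alpha) = 0$ for $\alpha \in Q_1$ and $d(\eps_k^*) = (-1)^m \rho_k$ for each $k$. Comparing this to Construction~\ref{con:dgQR}, the map sending $\alpha \mapsto \alpha$ for $\alpha \in Q_1$ and $\eps_k^* \mapsto (-1)^m \eta_k$ extends to an isomorphism of graded algebras $KQ'' \xrightarrow{\sim} KQ'$ which commutes with differentials, hence is an isomorphism of dg-algebras $(KQ'', d) \cong B(Q,R) = B$.

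Combining these two steps, Lemma~\ref{l:CYcompl}(a) ensures that $(KQ'', d)$ is a sub-dg-algebra of $\Gamma$, Lemma~\ref{l:CYcompl}(b) identifies $\Gamma = \Gamma_{m+1}(\wt{Q},W)$ with the $(m+1)$-Calabi-Yau completion of this sub-dg-algebra, and the isomorphism above identifies that sub-dg-algebra with $B$. Since the Calabi-Yau completion is functorial with respect to quasi-isomorphisms (in particular, isomorphisms) of dg-algebras, this yields the desired conclusion.

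I do not anticipate a genuine obstacle here: the content of the lemma is essentially the assertion that Construction~\ref{con:QW} is set up precisely so that the filtration $\varnothing \subseteq Q_1 \subseteq \wt{Q}_1$ triggers Keller's Proposition~6.6 as formalized in Lemma~\ref{l:CYcompl}. The only point requiring a little care is bookkeeping the sign $(-1)^m$ appearing in $d(\eps_k^*) = \partial_{\eps_k} W = (-1)^m \rho_k$ versus the sign-free $d(\eta_k) = \rho_k$ in $B$; this is absorbed by rescaling $\eps_k^* \mapsto (-1)^m \eta_k$ in the isomorphism, entirely analogous to the sign $(-1)^{m-1}$ appearing in the isomorphism $\varphi$ of Lemma~\ref{l:CYcompl}.
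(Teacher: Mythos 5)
Your proof is correct and follows essentially the same route as the paper: take $\Omega = Q_1$ in Lemma~\ref{l:CYcompl}, observe that $W=\sum_k \eps_k\rho_k$ has the required form, and identify the resulting sub-dg-algebra on $Q_1\cup\{\eps_1^*,\dots,\eps_n^*\}$ with $B(Q,R)$ via $\eps_k^*\mapsto(-1)^m\eta_k$. The sign bookkeeping you flag is exactly the point the paper also notes, and your rescaling handles it correctly.
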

\begin{proof}
This is a consequence of Lemma~\ref{l:CYcompl}.
Indeed, we may take $\Omega = Q_1$ so that $\wt{Q}_1 \setminus \Omega =
\{\eps_1, \dots, \eps_n\}$ and the superpotential $W = \sum_{k=1}^n
\eps_k \rho_k$ has the required form. We only note that the differential
of $\Gamma$ restricts to the path algebra of the quiver with arrows
$Q_1 \cup \{\eps^*_1, \dots, \eps^*_n\}$ and the resulting dg-algebra is
isomorphic to $B$ by mapping each arrow $\eps^*_k$ to $(-1)^m \eta_{\rho_k}$
and sending each $\alpha \in Q_1$ to itself.
\end{proof}

\begin{lemma}
$\hh^{-i}(\Gamma) \cong \hh^{-i}(B)$ for any $i < m-2$.
\end{lemma}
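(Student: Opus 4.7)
The plan is to compare $\Gamma$ and $B$ degree by degree. By Lemma~\ref{l:preprojB}, $B$ embeds into $\Gamma$ as the sub-dg-algebra generated (over the idempotents) by $Q_1 \cup \{\eps_k^*\}_{k=1}^n$, and from Table~\ref{tab:arrows} the remaining arrows of $\wb{\wt{Q}}$---the $\eps_k$ of degree $2-m$, the $\alpha^*$ of degree $1-m$, and the loops $t_i$ of degree $-m$---all have degree at most $2-m$. Since the arrows of $B$ themselves have non-positive degree, any path in $\wb{\wt{Q}}$ using at least one extra arrow has total degree at most $2-m$, so $\Gamma^p = B^p$ for every $p > 2-m$.

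Fix $p = -i$ with $i < m-2$, so that $p > 2-m$ and $p+1 > 2-m$. Then $\Gamma^p = B^p$ and $\Gamma^{p+1} = B^{p+1}$, and since the differential of $\Gamma$ restricts to that of $B$ on the subalgebra $B$, one has $\ker(d_\Gamma \colon \Gamma^p \to \Gamma^{p+1}) = \ker(d_B \colon B^p \to B^{p+1})$. For the image $d_\Gamma(\Gamma^{p-1})$ I would split into two cases: when $p-1 > 2-m$ (i.e.\ $i < m-3$) one has $\Gamma^{p-1} = B^{p-1}$ and the image equals $d_B(B^{p-1})$ immediately; in the edge case $p-1 = 2-m$ (i.e.\ $i = m-3$) a short degree count shows that, for $m \geq 3$, any path of degree $2-m$ using an extra arrow must have the form $u \eps_k v$ with $u, v \in KQ$, and such a path satisfies $d(u \eps_k v) = 0$ because $d$ vanishes on the arrows of $Q$ and on $\eps_k$. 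Hence $d_\Gamma(\Gamma^{p-1}) = d_B(B^{p-1})$ in both cases, yielding $\hh^{-i}(\Gamma) \cong \hh^{-i}(B)$.

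The main obstacle is the bookkeeping in the edge case $i = m-3$, where $\Gamma^{2-m}$ is strictly larger than $B^{2-m}$. The degree count relies crucially on the fact that for $m \geq 3$ the remaining extra arrows ($\alpha^*$ of degree $1-m$ and $t_i$ of degree $-m$) have degrees strictly below $2-m$ and cannot be compensated by the non-positive-degree arrows of $B$ to produce a degree-$(2-m)$ path. For $m = 2$ the lemma is vacuous, since $i < 0$ forces both homologies to vanish.
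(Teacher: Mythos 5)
Your proof is correct and follows essentially the same route as the paper: both identify $\Gamma^{p}=B^{p}$ for $p>2-m$ from the degrees in Table~\ref{tab:arrows}, and both handle the edge case $i=m-3$ by observing that the extra summand of $\Gamma^{2-m}$ is spanned by elements $u\eps_k v$ on which the differential vanishes, so the images in degree $3-m$ coincide.
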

\begin{proof}
From Table~\ref{tab:arrows} we see that $\Gamma^{-i} \cong B^{-i}$ for any
$i < m-2$, hence $\hh^{-i}(\Gamma) \cong \hh^{-i}(B)$ for any $i < m-3$.
The graded piece $\Gamma^{2-m}$ can be decomposed as
$\Gamma^{2-m} \cong B^{2-m} \oplus F$, where the space $F$ is spanned by
the elements of the form $u \eps_k v$ where $1 \leq k \leq n$
and $u, v$ are paths in $Q$. 
Since the differential of each such element vanishes, one has $d(F)=0$,
hence $d(\Gamma^{2-m}) \cong d(B^{2-m})$ and therefore $\hh^{3-m}(\Gamma)
\cong \hh^{3-m}(B)$ as well.
\end{proof}

\begin{lemma} \label{l:Qempty}
If $R$ is empty then $\hh^{-i}(\Gamma)=0$ for any $1 \leq i \leq m-2$.
\end{lemma}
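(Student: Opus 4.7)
The plan is to observe that when $R$ is empty, the Ginzburg dg-algebra $\Gamma$ is built from a very restricted set of generators, and the claim becomes a purely combinatorial statement about the possible degrees of paths.

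First I would note that with $R = \varnothing$ the graded quiver $\wt{Q}$ produced by Construction~\ref{con:QW} consists only of the arrows of $Q$, all placed in degree $0$, and the superpotential $W$ is the zero element of $K\wt{Q}/[K\wt{Q},K\wt{Q}]$. Consulting Table~\ref{tab:arrows}, the arrows of $\wb{\wt{Q}}$ that remain are then exactly:
\begin{itemize}
\item the arrows $\alpha \in Q_1$, in degree $0$;
\item the arrows $\alpha^*$ for $\alpha \in Q_1$, in degree $1-m$;
\item the loops $t_i$ for $i \in Q_0$, in degree $-m$.
\end{itemize}
In particular, every arrow of $\wb{\wt{Q}}$ has degree either $0$ or at most $1-m$.

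The next step is the degree count. Any element of $\Gamma$ is a linear combination of paths in $\wb{\wt{Q}}$, and the degree of such a path is the sum of the degrees of its constituent arrows. Since each arrow contributes either $0$ or a quantity $\leq 1-m$, the degree of any nonzero path lies in $\{0\} \cup \{n \in \bZ \,:\, n \leq 1-m\}$. Consequently, for any $i$ satisfying $1 \leq i \leq m-2$, one has $-i \in \{-1,-2,\dots,2-m\}$, which is disjoint from the set of achievable path degrees. Therefore $\Gamma^{-i} = 0$, and a fortiori $\hh^{-i}(\Gamma)=0$, in this range. (Note that when $m=2$ the range is empty and the statement is vacuous, so this argument applies uniformly for all $m \geq 2$.)

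There is no real obstacle here; the whole argument rests on reading off the arrows of $\wb{\wt{Q}}$ from Table~\ref{tab:arrows} under the hypothesis $R = \varnothing$ and then checking that the ``gap'' in possible degrees between $0$ and $1-m$ covers precisely the required range $-1, -2, \dots, 2-m$.
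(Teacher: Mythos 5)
Your proof is correct and follows essentially the same route as the paper: when $R$ is empty, Table~\ref{tab:arrows} shows the arrows of $\wb{\wt{Q}}$ have degrees $0$, $1-m$ or $-m$, so no path can have degree $-i$ with $1 \leq i \leq m-2$, forcing $\Gamma^{-i}=0$ and hence $\hh^{-i}(\Gamma)=0$. The paper's proof is exactly this degree count, stated more tersely.
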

\begin{proof}
If $R$ is empty, then by Lemma~\ref{l:QQarrows} the arrows in the graded
quiver $\wb{\wt{Q}}$ have degrees $0$, $1-m$ or $-m$. Hence the graded piece
$\Gamma^{-i}$ vanishes for each $1 \leq i \leq m-2$ and the claim follows.
\end{proof}

The next lemma provides a partial converse to Lemma~\ref{l:Qempty}.

\begin{lemma} \label{l:H2mR}
If $m>2$ and $R \subseteq \fr^2$, then $\dim_K \hh^{2-m}(\Gamma) \geq
|R|$.
\end{lemma}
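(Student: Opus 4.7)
The plan is to exhibit the $|R|$ arrows $\eps_1,\dots,\eps_n$ of degree $2-m$ as cocycles in $\Gamma^{2-m}$ whose cohomology classes are linearly independent. They are cocycles because arrows of the original graded quiver $\wt{Q}$ have zero differential in the Ginzburg dg-algebra, and they are linearly independent in $\Gamma^{2-m}$ as distinct arrows of $\wb{\wt{Q}}$. Hence the only point requiring proof is that no non-trivial combination $\sum_k c_k \eps_k$ lies in the image of $d \colon \Gamma^{1-m} \to \Gamma^{2-m}$.

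The key idea is to introduce a \emph{length filtration} on $\Gamma$: let $F^\ell \Gamma$ denote the $K$-span of the paths in $\wb{\wt{Q}}$ of length at least $\ell$. The crucial observation is that the hypothesis $R \subseteq \fr^2$ forces each term of the superpotential $W = \sum_k \eps_k \rho_k$ to have length at least $3$. Inspecting Table~\ref{tab:arrows}, one then checks that $d$ sends every generator of $\Gamma$ into $F^2 \Gamma$: the differentials of $\alpha \in Q_1$ and of $\eps_k$ vanish; $d(\eps_k^*) = (-1)^m \rho_k$ is a sum of paths of length $\geq 2$ by assumption; $d(\alpha^*) = \partial_\alpha W$ is a sum of cyclic derivatives of cycles of length $\geq 3$ with one arrow removed, hence of length $\geq 2$; and $d(t_i)$ is an explicit sum of commutators of length $2$. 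A standard application of the Leibniz rule then shows that $d(F^\ell \Gamma) \subseteq F^{\ell+1} \Gamma$ for every $\ell \geq 1$, while $d$ vanishes on the length-$0$ elements (the idempotents $e_i$).

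Since $m > 2$, the degree $1-m$ is strictly negative, so $\Gamma^{1-m}$ contains no idempotents and hence is contained in $F^1 \Gamma$. Consequently, every coboundary in $\Gamma^{2-m}$ coming from $\Gamma^{1-m}$ lies in $F^2 \Gamma$, i.e.\ is a linear combination of paths of length at least $2$. A non-zero linear combination $\sum_k c_k \eps_k$, on the other hand, is a non-zero element of length exactly $1$, so it cannot lie in $F^2 \Gamma$. This forces all $c_k$ to vanish, which shows that the classes of $\eps_1,\dots,\eps_n$ are linearly independent in $\hh^{2-m}(\Gamma)$ and yields the desired inequality $\dim_K \hh^{2-m}(\Gamma) \geq |R|$.

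The most delicate point is verifying the compatibility of $d$ with the length filtration, which depends directly on the assumption $R \subseteq \fr^2$; indeed, if some $\rho_k$ had a length-$1$ summand, then $d(\eps_k^*)$ would have a length-$1$ part and the argument would break. Once the filtration statement is in place, the conclusion is an immediate length-counting observation.
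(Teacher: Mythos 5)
Your proof is correct, but it takes a genuinely different route from the paper's. The paper decomposes $\Gamma^{2-m}$ as $E \oplus E'$ with $E$ the span of $\eps_1,\dots,\eps_n$, then explicitly enumerates a spanning set of $\Gamma^{1-m}$ by ``type'' (products involving the $\eps_k^*$, mixed products, products $u\alpha^* v$, plus an extra type needed only when $m=3$) and computes the differential of each type to check it lands in $E'$. You replace this case analysis with a single structural observation: the length filtration $F^{\ell}\Gamma$ satisfies $d(F^{\ell}\Gamma)\subseteq F^{\ell+1}\Gamma$, because $d$ sends every generator into $F^2\Gamma$ (this is exactly where $R\subseteq\fr^2$ enters, both for $d(\eps_k^*)=(-1)^m\rho_k$ and for $d(\alpha^*)=\partial_\alpha W$, whose terms are cyclic derivatives of cycles of length $\geq 3$) and $d$ is a graded derivation. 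Combined with $\Gamma^{1-m}\subseteq F^1\Gamma$ (valid since $1-m<0$, so no idempotents contribute), every coboundary in degree $2-m$ lies in $F^2\Gamma$ and hence has zero component along the length-one basis paths $\eps_k$. Note that your bounding subspace $F^2\Gamma\cap\Gamma^{2-m}$ is not the paper's $E'$ (for $m=3$ the latter contains the length-one paths $\eps_k^*$), but you only need $E\cap F^2\Gamma=0$ and $d(\Gamma^{1-m})\subseteq F^2\Gamma$, which you verify. Your argument is cleaner in that it proves the key containment once on generators via the Leibniz rule, uniformly in $m$, rather than re-enumerating spanning elements (with the special fourth type at $m=3$); the paper's version has the mild advantage of exhibiting the image of $d$ more explicitly. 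Both hinge on the hypothesis $R\subseteq\fr^2$ at the same point, as you correctly flag at the end.
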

\begin{proof}
The graded piece $\Gamma^{2-m}$ is spanned by two types of elements:
\begin{enumerate}
\item \label{it:typ01}
$u_1 \eps_{k_1}^* u_2 \eps_{k_2}^* u_3 \dots \eps_{k_{m-2}}^* u_{m-1}$,
where $u_1, u_2, \dots, u_{m-1}$ are paths in $Q$ and $1 \leq k_j \leq n$
for each $1 \leq j \leq m-2$;

\item \label{it:typ02}
$u \eps_k v$, where $u, v$ are paths in $Q$ and $1 \leq k \leq n$;
\end{enumerate}

As a $K$-vector space, we may thus decompose $\Gamma^{2-m}$ into a direct sum
$E \oplus E'$, where $E$ is the $n$-dimensional subspace
$E = \{\lambda_1 \eps_1 + \lambda_2 \eps_2 + \dots + \lambda_n \eps_n :
(\lambda_1, \lambda_2, \dots, \lambda_n) \in K^n\}$
and $E'$ is spanned by all the elements of type~\eqref{it:typ01} and those of
type~\eqref{it:typ02} such that at least one of $u, v$ has positive length.

We will show that $d(\Gamma^{1-m}) \subseteq E'$ and hence no non-zero element
in $E$ lies in the image of the differential $d$ acting on the graded piece
$\Gamma^{1-m}$. Since $d$ vanishes on $E$,
this will yield an $n$-dimensional subspace inside $\hh^{2-m}(\Gamma)$.

If $m>3$,
the graded piece $\Gamma^{1-m}$ is spanned by three types of elements:
\begin{enumerate}
\item \label{it:typ1}
$u_1 \eps_{k_1}^* u_2 \eps_{k_2}^* u_3 \dots \eps_{k_{m-1}}^* u_m$,
where $u_1, u_2, \dots, u_m$ are paths in $Q$ and $1 \leq k_j \leq n$
for each $1 \leq j \leq m-1$;

\item \label{it:typ2}
$u \eps_k v \eps_l^* w$ and $u \eps_k^* v \eps_l w$, where
$u, v, w$ are paths in $Q$ and $1 \leq k, l \leq n$;

\item \label{it:typ3}
$u \alpha^* v$, where $u, v$ are paths in $Q$ and $\alpha \in Q_1$.
\end{enumerate}
If $m=3$, in addition to the elements above there is a fourth type
\begin{enumerate}
\setcounter{enumi}{3}
\item \label{it:typ4}
$u \eps_k v \eps_l w$, where $u, v, w$ are paths in $Q$ and
$1 \leq k, l \leq n$.
\end{enumerate}

It suffices to prove that the differential of any of these elements belongs
to $E'$. This is clear for the elements of the
type~\eqref{it:typ1} since
\[
d(u_1 \eps_{k_1}^* u_2 \eps_{k_2}^* u_3 \dots \eps_{k_{m-1}}^* u_m) =
\sum_{j=1}^{m-1} \pm u_1 \eps_{k_1}^* u_2 \dots \eps_{k_{j-1}}^* u_j \rho_j
u_{j+1} \eps_{k_{j+1}}^* \dots \eps_{k_{m-1}}^* u_m
\]
and none of the arrows $\eps_1, \eps_2, \dots, \eps_n$ can appear in the right
hand side. This is also clear for the elements of type~\eqref{it:typ4} since
their differential vanishes.

Consider an element of type~\eqref{it:typ2}. Then
$d(u \eps_k v \eps_l^* w) = u \eps_k v \rho_l w$, hence the differential is
spanned by elements of the form $u' \eps_k v'$ where $u', v'$ are paths in $Q$
and $v'$ has positive length. The case of $d(u \eps_k^* v \eps_l w)$ is
similar.

Consider an element of type~\eqref{it:typ3}. Then
$d(u \alpha^* v) = u (\partial_{\alpha} W) v = \sum_{k=1}^n u 
\partial_\alpha (\eps_k \rho_k) v$.
Our assumption that $R \subseteq \fr^2$ implies that each
$\partial_\alpha (\eps_k \rho_k)$ is a linear combination of terms
$u'' \eps_k v''$ where at least one of the paths $u'', v''$ has positive
length.
\end{proof}

\begin{cor}
If $m>2$ and $R \subseteq \fr^2$, then $\hh^{2-m}(\Gamma)=0$ if and only if
$R$ is empty.
\end{cor}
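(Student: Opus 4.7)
The plan is to read this off directly from the two preceding lemmas as a two-sided implication, with essentially no further calculation required.

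For the forward direction, I assume $R$ is empty. Then Lemma~\ref{l:Qempty} asserts $\hh^{-i}(\Gamma) = 0$ for all $1 \leq i \leq m-2$. Since the hypothesis $m > 2$ guarantees that $m - 2 \geq 1$, the index $i = m-2$ falls within the allowed range, so specializing gives $\hh^{2-m}(\Gamma) = 0$.

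For the reverse direction, I argue contrapositively: suppose $R$ is nonempty, so $|R| = n \geq 1$. The hypotheses $m > 2$ and $R \subseteq \fr^2$ are precisely those of Lemma~\ref{l:H2mR}, which yields $\dim_K \hh^{2-m}(\Gamma) \geq |R| \geq 1$. In particular $\hh^{2-m}(\Gamma) \neq 0$.

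There is no real obstacle here; the corollary is essentially a bookkeeping consequence of the two lemmas it immediately follows, with the only small thing to verify being that the index $m-2$ lies in the valid range $1 \leq i \leq m-2$ of Lemma~\ref{l:Qempty}, which holds exactly because $m > 2$. The substantive content sits in Lemma~\ref{l:H2mR}, where the explicit $n$-dimensional subspace $E \subseteq \Gamma^{2-m}$ spanned by the $\eps_k$ is shown to survive in cohomology under the assumption $R \subseteq \fr^2$.
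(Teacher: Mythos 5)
Your proof is correct and is exactly the argument the paper intends: the corollary is stated without proof precisely because it follows immediately from Lemma~\ref{l:Qempty} (specialized at $i=m-2$, valid since $m>2$) in one direction and from the dimension bound of Lemma~\ref{l:H2mR} in the other.
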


Consider the triangulated category
$\cC_{(Q,R,m)} = \per \Gamma(Q,R,m) / \cD_{\fd}(\Gamma(Q,R,m))$.

\begin{theorem} \label{t:mCYtilted}
Let $Q$ be a quiver, let $R$ be a finite sequence of relations on $Q$ such
that the algebra $A=KQ/(R)$ is finite-dimensional and let $m>2$ be an integer.
Then:
\begin{enumerate}
\renewcommand{\theenumi}{\alph{enumi}}
\item
The category $\cC = \cC_{(Q,R,m)}$ is $\Hom$-finite and $m$-Calabi-Yau.

\item
The image $T$ of $\Gamma(Q,R,m)$ in $\cC$ is an $m$-cluster-tilting object
with
$\End_{\cC}(T) \cong A$.

\item
If $R \subseteq \fr^2$ then $\dim_K \Hom_{\cC}(T, \Sigma^{-(m-2)} T) \geq |R|$.
\end{enumerate}
\end{theorem}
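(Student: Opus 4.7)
The plan is to derive all three parts as a routine assembly of the lemmas already established, since the heavy lifting has been distributed into the preparatory results.

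First, I would verify that the dg-algebra $\Gamma = \Gamma(Q,R,m) = \Gamma_{m+1}(\wt{Q},W)$ satisfies the hypotheses of Theorem~\ref{t:mCY}. By Construction~\ref{con:QW}, the arrows of $\wt{Q}$ have degree $0$ (the arrows inherited from $Q$) or degree $2-m$ (the arrows $\eps_\rho$). Since $m > 2$, one has $1-m < 2-m \leq -1 < 0$, so condition~\eqref{it:t:deg} of Theorem~\ref{t:mCY} is satisfied. For condition~\eqref{it:t:fd}, Lemma~\ref{l:H0}\eqref{it:l:mgt2} gives $\hh^0(\Gamma) \cong A = KQ/(R)$, which is finite-dimensional by hypothesis. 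This immediately yields parts (a) and (b): $\cC_{(Q,R,m)}$ is $\Hom$-finite and $m$-Calabi-Yau, the image $T = \pi\Gamma$ is an $m$-cluster-tilting object, and $\End_{\cC}(T) \cong \hh^0(\Gamma) \cong A$.

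For part (c), I would combine two earlier results. Since $m > 2$, we have $0 \leq m-2 \leq m-1$, so Lemma~\ref{l:snex} applies and gives
\[
\Hom_{\cC}(T, \Sigma^{-(m-2)} T) \cong \hh^{-(m-2)}(\Gamma) = \hh^{2-m}(\Gamma).
\]
Under the additional assumption $R \subseteq \fr^2$, Lemma~\ref{l:H2mR} provides the lower bound $\dim_K \hh^{2-m}(\Gamma) \geq |R|$, which yields the desired inequality.

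There is no real obstacle here: all the structural work (that $\Gamma$ is bimodule $(m+1)$-Calabi-Yau and homologically smooth, that $\hh^0(\Gamma)$ equals $A$ in the regime $m>2$, that the negative extensions of the canonical cluster-tilting object compute the negative cohomology of $\Gamma$, and the dimension estimate for $\hh^{2-m}(\Gamma)$) has already been carried out in Theorems~\ref{t:GinzburgCY} and~\ref{t:mCY} and in Lemmas~\ref{l:snex}, \ref{l:H0}, and~\ref{l:H2mR}. The theorem is essentially the consolidation of these results for the specific Ginzburg dg-algebra associated to $(Q,R,m)$ via Construction~\ref{con:QW}.
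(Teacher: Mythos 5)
Your proof is correct and follows exactly the paper's own argument: verify the hypotheses of Theorem~\ref{t:mCY} (degrees of arrows in $\wt{Q}$ are $0$ or $2-m$, and $\hh^0(\Gamma)\cong A$ is finite-dimensional by Lemma~\ref{l:H0}) to get parts (a) and (b), then combine Lemma~\ref{l:snex} with Lemma~\ref{l:H2mR} for part (c). No differences worth noting.
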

\begin{proof}
Recall that $\Gamma(Q,R,m)= \Gamma_{m+1}(\wt{Q},W)$ where $(\wt{Q},W)$ is the
graded quiver with superpotential of degree $2-m$ associated to the triple
$(Q,R,m)$ as in Construction~\ref{con:QW}.
We claim that $(\wt{Q},W)$ satisfies the conditions of Theorem~\ref{t:mCY}.
Indeed, condition~\eqref{it:t:deg} holds since the degree of any arrow in
$\wt{Q}$ is either $0$ or $2-m$, and condition~\eqref{it:t:fd} holds since by
Lemma~\ref{l:H0}, $\hh^0(\Gamma) \cong A$ and the algebra $A$ is
assumed to be finite-dimensional.

Hence, by Theorem~\ref{t:mCY}, the triangulated category $\cC = \cC_{(Q,R,m)}
= \cC_{(\wt{Q},W)}$ is $\Hom$-finite, $m$-Calabi-Yau and the image $T$ of
$\Gamma = \Gamma(Q,R,m)$ under the canonical projection $\per \Gamma \to
\cC$ is an $m$-cluster-tilting object whose endomorphism algebra is
$\End_{\cC}(T) \cong \hh^0(\Gamma) \cong A$.
The last assertion follows from Lemma~\ref{l:snex} and Lemma~\ref{l:H2mR}.
\end{proof}

\begin{remark}
The $m$-Calabi-Yau category $\cC$ of Theorem~\ref{t:mCYtilted} depends on
$Q$ and $R$ and not only on the algebra $A$. There exist quivers $Q$ and
sequences of relations $R$ and $R'$ on $Q$ such that the algebras
$KQ/(R)$ and $KQ/(R')$ are finite-dimensional and isomorphic but for any
$m>2$ the categories $\cC_{(Q,R,m)}$ and $\cC_{(Q,R',m)}$ are not equivalent,
see Example~\ref{ex:nonuniq} below.
\end{remark}

\begin{remark} \label{rem:endproj}
Keep the notations and assumptions of Theorem~\ref{t:mCYtilted} and write $A$
as $A = \oplus_{i \in Q_0} P_i$ where $P_i = e_i A$ are the indecomposable
projective right $A$-modules. The decomposition $\Gamma = \oplus_{i \in Q_0}
e_i \Gamma$ for $\Gamma=\Gamma(Q,R,m)$ induces a decomposition
$T = \oplus_{i \in Q_0} T_i$ with $T_i$ being the image of $e_i \Gamma$ under
the canonical projection to $\cC$. Hence for any finitely generated
projective $A$-module 
$P = \oplus_{i \in Q_0} P_i^{e_i}$ such that $e_i>0$ for all $i \in Q_0$ 
there exists an $m$-cluster-tilting object $T_P = \oplus_{i \in Q_0} T_i^{e_i}$
in $\cC$ with $\End_{\cC}(T_P) \cong \End_A(P)$.
\end{remark}

The next result shows that the $m$-cluster category of any acyclic quiver
can be realized as a category of the form $\cC(Q,R,m)$. Recall that
a quiver is \emph{acyclic} if it has no cycles of positive length.
We refer to~\cite[Corollary~3.4]{Guo11} for a related result.

\begin{prop} \label{p:vosnex}
Let $Q$ be a quiver, let $R$ be a finite sequence of relations on $Q$ such
that $R \subseteq \fr^2$ and the algebra $KQ/(R)$ is finite-dimensional, and
let $m>2$ be an integer. 
Then the following conditions are equivalent, where $\cC$ denotes the 
$m$-Calabi-Yau category $\cC_{(Q,R,m)}$ and $T$ is the canonical
$m$-cluster-tilting object in $\cC$.
\begin{enumerate}
\renewcommand{\theenumi}{\alph{enumi}}
\item \label{it:p:KQ}
The quiver $Q$ is acyclic and the sequence $R$ is empty;

\item \label{it:p:B0}
The dg-algebra $B(Q,R)$ is concentrated in degree $0$ and has finite total
dimension;

\item \label{it:p:vosnex}
$\Hom_{\cC}(T, \Sigma^{-i} T) = 0$ for any $0 < i < m-1$;

\item \label{it:p:m2}
$\Hom_{\cC}(T, \Sigma^{-(m-2)} T) = 0$.
\end{enumerate}
Moreover, if any of these equivalent conditions holds and the field $K$ is
algebraically closed, then $\cC$ is triangle equivalent to the $m$-cluster
category of $Q$.
\end{prop}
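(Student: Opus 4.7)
My plan is to prove the equivalence of the four conditions through the cyclic chain (a) $\Rightarrow$ (b) $\Rightarrow$ (c) $\Rightarrow$ (d) $\Rightarrow$ (a), and then to deduce the concluding assertion about the $m$-cluster category from the third bullet of Keller--Reiten recalled in the introduction. The main technical inputs have already been prepared above: the identification $\Hom_\cC(T,\Sigma^{-i}T) \cong \hh^{-i}(\Gamma)$ of Lemma~\ref{l:snex}, the vanishing in Lemma~\ref{l:Qempty} when $R = \varnothing$, and the lower bound in Lemma~\ref{l:H2mR} when $R \subseteq \fr^2$.

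For (a) $\Leftrightarrow$ (b) I would simply unpack Construction~\ref{con:dgQR}: the underlying graded quiver of $B(Q,R)$ has exactly one arrow of degree $-1$ per element of $R$ and no arrows in any other negative degree, so concentration in degree $0$ forces $R$ to be empty and thereby $B(Q,R) = KQ$; the additional finiteness then amounts to $Q$ being acyclic. The implication (b) $\Rightarrow$ (c) reduces via (a) to the case $R = \varnothing$, where Lemma~\ref{l:Qempty} gives $\hh^{-i}(\Gamma) = 0$ for $1 \leq i \leq m-2$ and Lemma~\ref{l:snex} translates this into the vanishing required in (c). The specialization (c) $\Rightarrow$ (d) is then immediate since $0 < m-2 < m-1$ for $m > 2$.

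The substantive step, which I expect to be the main obstacle but which is already handled by Lemma~\ref{l:H2mR}, is (d) $\Rightarrow$ (a). By Lemma~\ref{l:snex} the hypothesis (d) is equivalent to $\hh^{2-m}(\Gamma) = 0$; under the standing assumption $R \subseteq \fr^2$, Lemma~\ref{l:H2mR} exhibits an $|R|$-dimensional subspace of $\hh^{2-m}(\Gamma)$, and comparing dimensions forces $|R| = 0$. Since $A = KQ/(R) = KQ$ is finite-dimensional by hypothesis, $Q$ must be acyclic, giving (a).

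For the final assertion, suppose the equivalent conditions hold. Then $A \cong KQ$ with $Q$ acyclic and $K$ algebraically closed; moreover $\cC = \per\Gamma / \cD_{\fd}(\Gamma)$ is algebraic by construction from the dg-algebra $\Gamma$, and condition (c) is precisely the vosnex condition~\eqref{e:vosnex}. All hypotheses of the Keller--Reiten result recalled as the third bullet in the introduction (see~\cite[\S2]{KellerReiten08}) are therefore satisfied, so $\cC$ is triangle equivalent to the $m$-cluster category of $Q$.
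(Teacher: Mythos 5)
Your proposal is correct and follows essentially the same route as the paper: both arguments rest on Lemma~\ref{l:snex} combined with Lemma~\ref{l:Qempty} for the vanishing direction and Lemma~\ref{l:H2mR} for the lower bound forcing $|R|=0$, and both conclude via the Keller--Reiten recognition theorem for $m$-cluster categories (the paper cites \cite[Theorem~4.2]{KellerReiten08}, i.e.\ the $m>2$ version with the vosnex condition, which is the result you are implicitly using). The only cosmetic difference is that you arrange the implications as a single cycle (a)$\Rightarrow$(b)$\Rightarrow$(c)$\Rightarrow$(d)$\Rightarrow$(a), whereas the paper treats (a)$\Leftrightarrow$(b) separately and proves (a)$\Rightarrow$(c)$\Rightarrow$(d)$\Rightarrow$(a).
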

\begin{proof}
The equivalence of~\eqref{it:p:KQ} and~\eqref{it:p:B0} is clear.
Let $\Gamma = \Gamma(Q,R,m)$.
For the implication \eqref{it:p:KQ} $\Rightarrow$ \eqref{it:p:vosnex},
note that if $Q$ is acyclic and $R$ is empty then
$\Hom_{\cC}(T, \Sigma^{-i} T) \cong \hh^{-i}(\Gamma) = 0$
for any $0 < i < m-1$ by Lemma~\ref{l:snex} and Lemma~\ref{l:Qempty}.
The implication \eqref{it:p:vosnex} $\Rightarrow$ \eqref{it:p:m2} is clear.
For the implication \eqref{it:p:m2} $\Rightarrow$ \eqref{it:p:KQ}, note that
by Lemma~\ref{l:snex} and Lemma~\ref{l:H2mR}
\[
\dim_K \Hom_{\cC}(T, \Sigma^{-(m-2)} T) = \dim_K \hh^{2-m}(\Gamma) \geq |R|,
\]
hence if $\Hom_{\cC}(T, \Sigma^{-(m-2)} T)$ vanishes $R$ must be empty and
then $Q$ is acyclic by our assumption that $KQ/(R)$ is finite-dimensional.

If any of these conditions holds, then $\cC$ is an algebraic $\Hom$-finite,
$m$-Calabi-Yau triangulated category with an $m$-cluster-tilting object $T$
such that $\End_{\cC}(T) \cong KQ$ for an acyclic quiver $Q$ and
$\Hom_{\cC}(T, \Sigma^{-i} T) = 0$ for any $0 < i < m-1$.
By the characterization of higher cluster categories of Keller and Reiten
\cite[Theorem~4.2]{KellerReiten08}) if $K$ is algebraically closed, then $\cC$
is triangle equivalent to the $m$-cluster-category of the quiver $Q$.
\end{proof}

\subsection{Systems of relations}

Let $Q$ be a quiver and let $\fr$ be the two-sided ideal of $KQ$ generated
by the arrows of $Q$.
An ideal $I$ of $KQ$ is \emph{admissible} if there exists some $N \geq 2$
such that $\fr^N \subseteq I \subseteq \fr^2$.

\begin{defn}[\protect{\cite{Bongartz83}}]
Let $I$ be an ideal of $KQ$.
A \emph{system of relations for $I$} is a set $R$ of relations such that
$R$, but no proper subset of it, generates $I$ as a two-sided ideal.
\end{defn}

The following statement is well-known.

\begin{lemma} \label{l:sysrel}
If $I$ is admissible then there exists a finite system of relations for $I$.
\end{lemma}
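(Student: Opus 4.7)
The plan is to first produce any finite generating set for $I$ as a two-sided ideal and then prune it to a minimal one consisting of relations. Both halves of the admissibility condition $\fr^N \subseteq I \subseteq \fr^2$ play a role: the lower bound reduces the problem to a finite-dimensional setting, while the upper bound ensures that every element of $I$ lies in $\fr$ and can therefore be split into pieces that are relations.

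For the finite generation step, since $Q$ has finitely many vertices and arrows, there are only finitely many paths of length less than $N$, so $KQ/\fr^N$ is finite-dimensional; hence so is the subspace $I/\fr^N$. I would choose a finite set $R_1 \subset I$ whose images form a $K$-basis of $I/\fr^N$. Then $I = (R_1) + \fr^N$, where $(R_1)$ denotes the two-sided ideal of $KQ$ generated by $R_1$. Moreover, $\fr^N$ is itself generated as a two-sided ideal by the finite set $R_2$ of all paths of length exactly $N$. Therefore the finite set $R_1 \cup R_2 \subseteq I$ generates $I$ as a two-sided ideal.

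To arrange that each generator lies in some $e_i \fr e_j$, I replace every element $r$ of the generating set by its finitely many pieces $e_i r e_j$ with $i, j \in Q_0$; the two-sided ideal they generate is unchanged, each nonzero piece is a relation, and the resulting set is still finite. Starting from this finite set of relations, I delete elements one at a time so long as the remainder still generates $I$. Since the set is finite this procedure terminates, and the outcome is by definition a system of relations for $I$.

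I do not expect any substantive obstacle here; admissibility is used in exactly the two places one expects and the rest is bookkeeping. For a more structural variant, one could instead observe that $I/(\fr I + I \fr)$ is a quotient of $I/\fr^{N+1}$ (using $\fr^{N+1} \subseteq \fr \cdot \fr^N \subseteq \fr I$) and is therefore finite-dimensional, then lift any $K$-basis to a set $R$ of relations and derive both $(R) = I$ and the minimality of $R$ via a Nakayama-type argument.
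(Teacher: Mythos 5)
Your main argument is correct and is essentially the paper's own proof: finite-dimensionality of $I/\fr^N$ gives a finite generating set once the paths of length $N$ are added, splitting by idempotents makes the generators relations (the paper does this by choosing bases of the pieces $e_i(I/\fr^N)e_j$ directly), and finiteness lets the pruning step terminate. One caveat on your closing aside: lifting a basis of $I/(I\fr+\fr I)$ and concluding $(R)=I$ by a Nakayama-type argument requires knowing in advance that the ideal generated by the \emph{lifts} is admissible, and this can genuinely fail otherwise, as the paper shows in Example~\ref{ex:minrel} (cf.\ Lemma~\ref{l:minrel}).
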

\begin{proof}
By assumption, there is some $N \geq 2$ such that $\fr^N \subseteq I$.
The algebra $KQ/\fr^N$ is finite-dimensional, as it is spanned by all the
paths of $Q$ of length smaller than $N$. Therefore the space $I/\fr^N$ is also
finite-dimensional.
For each $i,j \in Q_0$, choose a basis of $e_i (I/\fr^N) e_j$ and choose
a set $R_{i,j}$ inside $e_i I e_j$ whose image modulo $\fr^N$ equals that
basis. Then $I=(R)$ for the finite set $R$ given by
\[
R = \{\text{the paths of length $N$ in $Q$}\} \cup
\bigcup_{i,j \in Q_0} R_{i,j} .
\]

If $R$ is not a system of relations for $I$ then there is a proper subset $R'$
of $R$ such that $I=(R')$. In this way we can repeatedly remove elements and
still have a set generating $I$. Since $R$ is finite, this process must
terminate and we eventually end with a system of relations for $I$.
\end{proof}

Under some conditions, another approach to the construction of systems of
relations involves lifting of basis elements of the space $I/(I \fr + \fr I)$,
see the discussion in~\cite[\S7]{BIKR08}.

\begin{lemma} \label{l:fdIrrI}
If $I$ is admissible, then $I/(I \fr + \fr I)$ is finite-dimensional.
\end{lemma}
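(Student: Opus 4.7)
The plan is to use admissibility to bound the codimension by a finite-dimensional quotient of $KQ$. Choose $N \geq 2$ such that $\fr^N \subseteq I \subseteq \fr^2$. The key observation is that $\fr^{N+1} \subseteq I\fr$: indeed, $\fr^{N+1} = \fr^N \cdot \fr \subseteq I \cdot \fr = I\fr$. In particular $\fr^{N+1} \subseteq I\fr + \fr I$.

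Next, since $\fr^{N+1} \subseteq I \fr + \fr I \subseteq I$, there is a surjection of $K$-vector spaces
\[
I/\fr^{N+1} \twoheadrightarrow I/(I\fr + \fr I),
\]
so it suffices to show that $I/\fr^{N+1}$ is finite-dimensional. But $I/\fr^{N+1}$ is a $K$-subspace of $KQ/\fr^{N+1}$, which has a $K$-basis consisting of the paths of $Q$ of length at most $N$; as $Q$ has finitely many vertices and arrows, this is a finite set, so $KQ/\fr^{N+1}$ is finite-dimensional. Hence $I/(I\fr + \fr I)$ is finite-dimensional as well.

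There is no real obstacle: the argument is a one-line consequence of admissibility, and the only tiny care needed is in checking that $\fr^{N+1}$ lands inside $I\fr + \fr I$ (handled by multiplying the inclusion $\fr^N \subseteq I$ on the right by $\fr$). No additional structure theory or lifting argument is required for the finite-dimensionality itself.
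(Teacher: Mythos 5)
Your proof is correct and follows essentially the same route as the paper: choose $N$ with $\fr^N \subseteq I$, note $\fr^{N+1} \subseteq I\fr + \fr I$, and bound $I/(I\fr+\fr I)$ by the finite-dimensional space $I/\fr^{N+1} \subseteq KQ/\fr^{N+1}$. The only difference is that you spell out the inclusion $\fr^{N+1} \subseteq I\fr$ explicitly, which the paper leaves implicit.
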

\begin{proof}
Let $N \geq 2$ be such that $\fr^N \subseteq I$. Then 
$\fr^{N+1} \subseteq (I \fr + \fr I)$ and we have an inclusion and
a surjection
\[
KQ/\fr^{N+1} \supset I/\fr^{N+1} \twoheadrightarrow I/(I \fr + \fr I).
\]
The claim now follows since the quotient $KQ/\fr^{N+1}$ is finite-dimensional.
\end{proof}

\begin{lemma} \label{l:minrel}
Let $I$ be an ideal of $KQ$ and let $R$ be a set of relations inside $I$.
\begin{enumerate}
\renewcommand{\theenumi}{\alph{enumi}}
\item \label{it:l:span}
If $I=(R)$ then the image of $R$ modulo $I \fr + \fr I$ spans the vector space
$I/(I \fr + \fr I)$.

\item \label{it:l:lift}
Assume that the ideal (R) is admissible and the image of $R$ modulo
$I \fr + \fr I$ spans the vector space $I/(I \fr + \fr I)$. Then $I=(R)$.

\item \label{it:l:liftbasis}
Assume that the ideal (R) is admissible and the image of $R$ modulo
$I \fr + \fr I$ is a basis of the vector space $I/(I \fr + \fr I)$.
Then $R$ is a system of relations for $I$.
\end{enumerate}
\end{lemma}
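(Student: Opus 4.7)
My plan is to treat the three parts in order, with part (b) doing the real work and part (c) following quickly from (a) and (b).

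For part (a), write an arbitrary element of $(R)$ as a finite sum $\sum_k a_k \rho_k b_k$ with $\rho_k \in R$ and $a_k, b_k \in KQ$, and decompose $a_k = \alpha_k + a'_k$, $b_k = \beta_k + b'_k$, where $\alpha_k, \beta_k$ are $K$-linear combinations of vertex idempotents and $a'_k, b'_k \in \fr$. Expanding the product gives four terms: the three containing either $a'_k$ or $b'_k$ lie in $\fr \cdot I + I \cdot \fr$ because $\rho_k \in I$, while $\alpha_k \rho_k \beta_k$ is a scalar multiple of $\rho_k$ by the source/target relations for the vertex idempotents. Hence the image of $R$ spans $I/(I\fr + \fr I)$.

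For part (b), set $J = (R) \subseteq I$. The hypothesis reads $I = J + I\fr + \fr I$, i.e.\ $I \equiv I\fr + \fr I \pmod{J}$. Substituting this congruence repeatedly into itself, a straightforward induction on $k$ yields
\[
I \subseteq J + \sum_{i+j=k,\; i,j \geq 0} \fr^i I \fr^j.
\]
Admissibility of $J$ supplies $N \geq 2$ with $\fr^N \subseteq J$. Taking $k = 2N$, every pair $(i,j)$ with $i+j = 2N$ satisfies $i \geq N$ or $j \geq N$, so $\fr^i \subseteq J$ or $\fr^j \subseteq J$; since $J$ is a two-sided ideal this forces $\fr^i I \fr^j \subseteq J$. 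Therefore $I \subseteq J$, and the reverse inclusion is automatic. This is the main obstacle: it is a two-sided Nakayama-type argument, and the symmetric iteration in $i$ and $j$ is what allows the admissibility bound $\fr^N \subseteq J$ to be leveraged on both sides simultaneously.

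For part (c), apply part (b) to conclude $I=(R)$. Since $I/(I\fr + \fr I)$ is finite-dimensional by Lemma~\ref{l:fdIrrI} and the image of $R$ is by hypothesis a basis, $R$ is finite and its elements have distinct nonzero images. If some proper subset $R' \subsetneq R$ generated $I$, then part (a) applied to $R'$ would imply that the image of $R'$ spans $I/(I\fr + \fr I)$; but the image of $R'$ is a proper subset of a basis and hence cannot span. This contradiction establishes that $R$ is a system of relations for $I$.
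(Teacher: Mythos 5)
Your proof is correct and follows essentially the same route as the paper: part (a) via the decomposition of coefficients into idempotent and radical parts, part (b) via iterated substitution of the spanning hypothesis combined with admissibility to absorb the tail into $(R)$, and part (c) by combining the two. The only cosmetic difference is in (b), where the paper iterates $N$ times and observes the remainder lies in $\fr^N \subseteq (R)$ directly, whereas you iterate $2N$ times and invoke the two-sided ideal property; both are valid.
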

\begin{proof}
For part~\eqref{it:l:span}, observe that since each $\rho \in R$ is a relation,
multiplying it from the left or from the right by an element of the form
$\sum_{i \in Q_0} \lambda_i e_i$ gives a scalar multiple of $\rho$.

For part~\eqref{it:l:lift}, we slightly modify the argument
in~\cite[Lemma~3.6]{BMR06}. Let $N \geq 2$ such that $\fr^N \subseteq (R)$
and let $x \in I$. By assumption, we can write
\begin{equation} \label{e:IbyR}
x = \lambda_1 \rho_1 + \dots + \lambda_n \rho_n + x'_1 r'_1 + \dots +
x'_k r'_k + r''_1 x''_1 + \dots + r''_{\ell} x''_{\ell}
\end{equation}
with scalars $\lambda_1, \dots, \lambda_n \in K$, 
relations $\rho_1, \dots, \rho_n \in R$ and
$x'_1, \dots, x'_k, x''_1, \dots, x''_{\ell} \in I$, 
$r'_1, \dots, r'_k, r''_1, \dots, r''_{\ell} \in \fr$.
Writing each of the elements $x'_1, \dots, x'_k, x''_1, \dots, x''_{\ell}$
using~\eqref{e:IbyR} and repeating this process $N$ times, we conclude that
$x = \rho + r$ where $\rho \in (R)$ and $r \in \fr^N$, hence $x \in (R)$.

Finally, part~\eqref{it:l:liftbasis} follows from~\eqref{it:l:span}
and~\eqref{it:l:lift}. Moreover, $R$ is finite by Lemma~\ref{l:fdIrrI}.
\end{proof}

\begin{example} \label{ex:minrel}
The assumption in parts~\eqref{it:l:lift} and~\eqref{it:l:liftbasis} that the
ideal $(R)$ is admissible cannot be dropped. For example, consider the quiver
$Q$ given by
\[
\xymatrix{
{\bullet} \ar@(ul,dl)[]_{\alpha} \ar@(ur,dr)[]^{\beta}
}
\]
and let
$I=(\alpha^2-\beta \alpha \beta, \beta^2 - \alpha \beta \alpha, \alpha^2 \beta)$.
One can check that $\fr^5 \subseteq I \subseteq \fr^2$ hence the ideal $I$ is
admissible. The $8$-dimensional algebra $KQ/I$ is an algebra of quaternion type
in the sense of Erdmann~\cite{Erdmann90}. When $K$ is algebraically closed of
characteristic $2$, this algebra is isomorphic to the group algebra of the
quaternion group.

The image of $\alpha^2 \beta$ in $I \fr + \fr I$ vanishes, as the following
calculation shows:
\[
\alpha^2 \beta = (\alpha^2 - \beta \alpha \beta) \beta + \beta \alpha
(\beta^2 - \alpha \beta \alpha) + \beta \alpha^2 \beta \alpha \in I \fr + \fr I,
\]
hence $I/(I \fr + \fr I)$ is spanned by the images of the elements
$\alpha^2 - \beta \alpha \beta$ and $\beta^2 - \alpha \beta \alpha$. 
Nevertheless,
the ideal $I' = (\alpha^2-\beta \alpha \beta, \beta^2 - \alpha \beta \alpha)$
is not equal to $I$. Indeed, by letting $\alpha$ and $\beta$ act
on $K$ as the identity we get a one-dimensional module over $KQ/I'$ with a
non-zero action of $\alpha^2 \beta$, hence $\alpha^2 \beta \not \in I'$.
\end{example}

\subsection{Finite-dimensional algebras are $(m>2)$-CY-tilted}
In this section we assume that the field $K$ is algebraically closed.
A finite-dimensional algebra $A$ over $K$ is called \emph{basic} if
$A_A \cong P_1 \oplus \dots \oplus P_r$ where $P_1, \dots, P_r$ are
representatives of the isomorphism classes of the indecomposable projective
right $A$-modules.

\begin{theorem}[Gabriel] \label{t:Gabriel}
Let $A$ be a basic, finite-dimensional algebra over $K$. Then
there exist a quiver $Q$ and an admissible ideal $I$ of $KQ$ such
that $A \cong KQ/I$.
\end{theorem}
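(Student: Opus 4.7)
The plan is to use the Wedderburn--Artin structure of the semisimple quotient $A/J$ together with idempotent lifting to construct both the quiver $Q$ and a surjection $\varphi \colon KQ \to A$, and then to verify admissibility of $\ker \varphi$ via a comparison of the $\fr$-adic and $J$-adic filtrations. Let $J = \mathrm{rad}(A)$, which is nilpotent since $A$ is finite-dimensional, so $J^N = 0$ for some $N \geq 2$. By Wedderburn--Artin the semisimple algebra $A/J$ is a product of matrix algebras over finite-dimensional $K$-division algebras; algebraic closedness of $K$ forces each such division algebra to equal $K$, and the basic assumption forces each matrix factor to be $1 \times 1$. Hence $A/J \cong K^r$ for some $r$, providing a complete set $\bar{e}_1, \dots, \bar{e}_r$ of primitive orthogonal idempotents with sum $1$.

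Using that $J$ is nilpotent, I would lift the $\bar{e}_i$ to a complete set $e_1, \dots, e_r$ of orthogonal idempotents in $A$ summing to $1$, by the standard inductive procedure. Then I set $Q_0 = \{1, \dots, r\}$ and take the number of arrows $i \to j$ in $Q_1$ to equal $\dim_K e_j (J/J^2) e_i$. Choosing, for each such arrow, a lift in $e_j J e_i$ of the corresponding basis vector of $e_j (J/J^2) e_i$ defines an algebra homomorphism $\varphi \colon KQ \to A$ sending each vertex idempotent $e_i \in KQ$ to $e_i \in A$.

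It remains to verify that $\varphi$ is surjective and that $I = \ker \varphi$ satisfies $\fr^N \subseteq I \subseteq \fr^2$. By construction $\varphi(\fr) \subseteq J$ and the induced map $\fr/\fr^2 \to J/J^2$ is surjective; the dimension count $\dim_K \fr/\fr^2 = \sum_{i,j} \dim_K e_j (J/J^2) e_i = \dim_K J/J^2$ upgrades this to an isomorphism, which immediately yields $I \subseteq \fr^2$. Surjectivity of $\varphi$ then follows by passing to associated graded with respect to the $\fr$-adic and $J$-adic filtrations: the map $KQ/\fr \to A/J$ is an isomorphism, $\fr/\fr^2 \to J/J^2$ is an isomorphism, and each $\fr^k/\fr^{k+1} \to J^k/J^{k+1}$ for $k \geq 2$ is surjective since $J^k$ is spanned over $K$ by products of $k$ elements lifting generators of $J/J^2$. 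Finally, $\varphi(\fr^N) \subseteq J^N = 0$ gives $\fr^N \subseteq I$.

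I expect the main technical obstacle to be the idempotent-lifting step, since it is the only non-elementary ingredient; everything else reduces to the Wedderburn--Artin classification of semisimple algebras and to straightforward filtration arguments.
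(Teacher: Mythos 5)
Your argument is correct, and it is the standard textbook proof of Gabriel's theorem; the paper itself gives no proof at all, deferring entirely to Gabriel's original article (\cite[\S4.3]{Gabriel80}), so your proposal supplies a genuine self-contained argument where the paper only supplies a reference. The route you take --- Wedderburn--Artin plus basicness to get $A/J \cong K^r$, lifting a complete orthogonal set of idempotents through the nilpotent radical, defining arrows from a basis of the degree-one part of the associated graded, and then comparing the $\fr$-adic and $J$-adic filtrations to get surjectivity and $\fr^N \subseteq \ker\varphi \subseteq \fr^2$ --- is exactly what the cited source does, so nothing essential is missing. Two small points of hygiene. First, your justification of graded surjectivity should be stated modulo $J^{k+1}$: it is $J^k/J^{k+1}$, not $J^k$ itself, that is spanned by (images of) products of $k$ lifts of generators of $J/J^2$; the inclusion $\mathrm{im}(\varphi)+J^{k+1}\supseteq J^k$ together with $J^N=0$ and descending induction then gives surjectivity, as you intend. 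Second, your convention that an arrow $i \to j$ is sent into $e_j J e_i$ is opposite to this paper's, where paths compose by concatenation $pq$ with $s(q)=t(p)$ and a path from $i$ to $j$ lies in $e_i \fr e_j$; with the paper's convention an arrow $\alpha\colon i \to j$ must map into $e_i J e_j$, so the arrow multiplicities should be $\dim_K e_i(J/J^2)e_j$. Neither point affects the validity of the argument, only its bookkeeping.
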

\begin{proof}
See~\cite[\S4.3]{Gabriel80}.
\end{proof}

Let $A$ be a basic, finite-dimensional algebra.
By Theorem~\ref{t:Gabriel}, we can write $A=KQ/I$ for a quiver $Q$ and an
admissible ideal $I$ of $Q$. We denote by $S_i$ the simple $A$-module
corresponding to a vertex $i \in Q_0$ and consider the $A$-module
$S = \bigoplus_{i \in Q_0} S_i$.

\begin{lemma} \label{l:mincardR}
If $R$ is a system of relations for $I$ then
$|R| \geq \dim_K \Ext^2_A(S,S)$.
\end{lemma}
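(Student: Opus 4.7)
The plan is to combine Lemma~\ref{l:minrel}\eqref{it:l:span} with the classical identity $\dim_K I/(I\fr + \fr I) = \dim_K \Ext^2_A(S,S)$. Granting that identity for the moment, the conclusion is immediate: since $R$ is a system of relations for $I$ we have $I = (R)$, so by Lemma~\ref{l:minrel}\eqref{it:l:span} the images of the elements of $R$ span the vector space $I/(I\fr+\fr I)$, and hence $|R| \geq \dim_K I/(I\fr+\fr I) = \dim_K \Ext^2_A(S,S)$.

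To prove the identity I would compute $\Ext^2_A(S,S)$ from the first three terms of a minimal projective resolution of the semisimple right $A$-module $S$: take $P_0 = A$ surjecting onto $S$ with kernel $J = \fr/I$, and take $P_1 = \bigoplus_{\alpha \in Q_1} e_{t(\alpha)} A$ with $d_1$ sending the top of the $\alpha$-summand to the class of $\alpha$. This is a projective cover of $J$ because $\{\alpha + J^2\}_{\alpha \in Q_1}$ is a basis of $J/J^2$. Since $S$ is semisimple and minimality forces every differential of the resolution to land in the radical of its target, the functor $\Hom_A(-, S)$ kills all differentials; hence $\dim_K \Ext^2_A(S,S)$ equals the number of indecomposable summands of $P_2$, which in turn equals $\dim_K \Omega^2(S)/\Omega^2(S)\cdot J$ where $\Omega^2(S) := \ker d_1$.

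The main remaining step, and the heart of the argument, is to produce a $K$-linear isomorphism
\[
\Omega^2(S)/\Omega^2(S) \cdot J \;\cong\; I/(I\fr+\fr I).
\]
An element of $\Omega^2(S)$ is a tuple $(x_\alpha)_{\alpha \in Q_1}$ with $x_\alpha \in e_{t(\alpha)}A$ and $\sum_\alpha \alpha x_\alpha = 0$ in $A$; choosing lifts $\tilde x_\alpha \in e_{t(\alpha)} KQ$, the sum $\sum_\alpha \alpha \tilde x_\alpha$ lies in $I$, and I send the class of $(x_\alpha)$ to its class in $I/(I\fr+\fr I)$. Well-definedness (independence of the lifts) uses that $I \cdot \fr \subseteq I\fr + \fr I$, and surjectivity follows by decomposing any $r \in I \subseteq \fr^2$ as $\sum_\alpha \alpha \tilde x_\alpha$ with $\tilde x_\alpha \in e_{t(\alpha)} KQ$ by grouping paths according to their first arrow. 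The real content is injectivity on $\Omega^2(S)/\Omega^2(S)\cdot J$: one direction shows that a product $(x_\alpha)\cdot \bar{\beta}$ with $\bar{\beta} \in J$ maps into $I\fr$, while the converse requires that any representative landing in $I\fr + \fr I$ can be rewritten, modulo $\Omega^2(S)\cdot J$, as such a product. This last verification is the main technical obstacle, and is where admissibility of $I$ enters crucially, via the filtration by powers of $\fr$, to guarantee that the decomposition process terminates.
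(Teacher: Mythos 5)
Your top-level reduction is exactly the paper's: combine Lemma~\ref{l:minrel}\eqref{it:l:span} with the identity $\dim_K I/(I\fr+\fr I)=\dim_K\Ext^2_A(S,S)$. The paper stops there and cites this identity from Bongartz (\cite[Corollary~1.1]{Bongartz83}); you instead undertake to prove it, and that is where your write-up has a genuine gap. You correctly reduce to producing an isomorphism $\Omega^2(S)/\Omega^2(S)\cdot J\cong I/(I\fr+\fr I)$ (with $J=\fr/I$ the radical), but you explicitly defer the injectivity of your map as ``the main technical obstacle'' and never carry it out. As written, the identity is therefore asserted rather than proved, and it is the entire content of the lemma beyond Lemma~\ref{l:minrel}.

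The gap is fillable, and more cleanly than your coordinate-wise lifting suggests. Since every path of positive length has a unique first arrow, the map $\bigoplus_{\alpha\in Q_1} e_{t(\alpha)}KQ\to \fr$ sending $(y_\alpha)$ to $\sum_\alpha \alpha y_\alpha$ is an isomorphism of right $KQ$-modules. Applying $-\otimes_{KQ}A$ identifies $P_1$ with $\fr/\fr I$ and $d_1$ with the map induced by the inclusion $\fr\subseteq KQ$, so $\Omega^2(S)=\ker d_1\cong (I\cap\fr)/\fr I=I/\fr I$ (using $I\subseteq\fr^2\subseteq\fr$) and $\Omega^2(S)\cdot J\cong (I\fr+\fr I)/\fr I$; the desired isomorphism follows with no case analysis or termination argument. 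Note also that admissibility enters not ``to make the decomposition process terminate'' but in two concrete ways: $I\subseteq\fr^2$ gives $\ker d_1\subseteq P_1J$, so $P_1\to J$ is a projective cover and $\Hom_A(-,S)$ kills all differentials of the minimal resolution (which is what justifies reading off $\dim_K\Ext^2_A(S,S)$ as the number of indecomposable summands of $P_2$); and $\fr^N\subseteq I$ makes $A$ finite-dimensional so that projective covers exist. With these points supplied your argument closes; alternatively, one can simply cite Bongartz as the paper does.
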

\begin{proof}
We have $|R| \geq \dim_K I/(I \fr + \fr I) = \dim_K \Ext^2_A(S,S)$ where the
left inequality is a consequence of Lemma~\ref{l:minrel}\eqref{it:l:span} and
the right equality is~\cite[Corollary~1.1]{Bongartz83}.
\end{proof}

Note that Example~\ref{ex:minrel} shows that the inequality in
Lemma~\ref{l:mincardR} can be strict.

\begin{theorem} \label{t:mCYbasic}
Let $A$ be a basic finite-dimensional algebra.
Then the set of pairs $(Q,R)$ consisting of a quiver $Q$ and a sequence of relations $R \subseteq \fr^2$ such that $A \cong KQ/(R)$ is not empty.
For any such pair $(Q,R)$ and any integer $m>2$, 
the triangulated category $\cC=\cC_{(Q,R,m)}$ is
$\Hom$-finite, $m$-Calabi-Yau and its canonical $m$-cluster-tilting object $T$
satisfies $\End_{\cC}(T) \cong A$ and
$\dim_K \Hom_{\cC}(T, \Sigma^{-(m-2)}T) \geq \dim_K \Ext^2_A(S,S)$.
\end{theorem}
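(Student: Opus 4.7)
The strategy is to combine Gabriel's presentation theorem with Theorem~\ref{t:mCYtilted}, extracting only a minor strengthening of Lemma~\ref{l:mincardR} for the final inequality.

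Since $A$ is basic and finite-dimensional over the algebraically closed field $K$, Theorem~\ref{t:Gabriel} supplies a quiver $Q$ and an admissible ideal $I$ of $KQ$ (so $\fr^N \subseteq I \subseteq \fr^2$ for some $N\geq 2$) with $A \cong KQ/I$. Applied to this admissible $I$, Lemma~\ref{l:sysrel} produces a \emph{finite} system of relations $R$ for $I$, and by construction every element of $R$ lies in $I \subseteq \fr^2$. This already proves the first assertion: the set of pairs $(Q,R)$ meeting the hypotheses is non-empty.

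Now fix any such pair $(Q,R)$ and any integer $m>2$. Since $R \subseteq \fr^2$ and $KQ/(R) \cong A$ is finite-dimensional, Theorem~\ref{t:mCYtilted} applies directly and gives that $\cC=\cC_{(Q,R,m)}$ is $\Hom$-finite and $m$-Calabi-Yau, that the canonical object $T$ is $m$-cluster-tilting with $\End_{\cC}(T) \cong A$, and that
\[
\dim_K \Hom_{\cC}(T, \Sigma^{-(m-2)}T) \;\geq\; |R|.
\]

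The only remaining point is to bound $|R|$ from below by $\dim_K \Ext^2_A(S,S)$ for an \emph{arbitrary} generating sequence $R$ of $I$, whereas Lemma~\ref{l:mincardR} is stated only for systems of relations. Inspection of the proof of that lemma shows it uses nothing beyond Lemma~\ref{l:minrel}(a), whose hypothesis is merely that $R$ generate $I$: the image of $R$ then spans $I/(I\fr+\fr I)$, so $|R|\geq \dim_K I/(I\fr+\fr I) = \dim_K \Ext^2_A(S,S)$ by Bongartz's identification. Chaining with the previous inequality finishes the proof. There is no substantive obstacle here; the hard work has already been carried out in Theorem~\ref{t:mCYtilted}, and the only delicate point is the observation that Lemma~\ref{l:mincardR} applies verbatim to any generating sequence, minimal or not.
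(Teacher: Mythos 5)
Your proof is correct and follows the paper's own route exactly: Gabriel's theorem plus Lemma~\ref{l:sysrel} for non-emptiness, and Theorem~\ref{t:mCYtilted} combined with the counting argument of Lemma~\ref{l:mincardR} for the rest. Your extra observation that the lower bound $|R|\geq\dim_K \Ext^2_A(S,S)$ holds for an arbitrary generating sequence (via Lemma~\ref{l:minrel}(a) and Bongartz), not just for a system of relations, is a worthwhile clarification that the paper's one-line proof glosses over.
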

\begin{proof}
The first claim is a consequence of Theorem~\ref{t:Gabriel} and
Lemma~\ref{l:sysrel}. The second claim is a consequence of 
Theorem~\ref{t:mCYtilted} and Lemma~\ref{l:mincardR}.
\end{proof}

\begin{cor}
A finite-dimensional algebra over an algebraically closed field is
$m$-CY-tilted for any $m>2$.
\end{cor}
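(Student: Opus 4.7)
The strategy is to reduce from an arbitrary finite-dimensional algebra to a basic one, and then upgrade the basic case (handled by Theorem~\ref{t:mCYbasic}) using the flexibility of multiplicities provided by Remark~\ref{rem:endproj}.

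First, I would invoke standard Morita theory: given a finite-dimensional $K$-algebra $\Lambda$, decompose $\Lambda_\Lambda = \bigoplus_{i=1}^r Q_i^{e_i}$ as a right module, where the $Q_i$ are pairwise non-isomorphic indecomposable projective right $\Lambda$-modules and each $e_i \geq 1$. Setting $A = \End_{\Lambda}(\bigoplus_{i=1}^r Q_i)^{op}$, one obtains a basic finite-dimensional algebra Morita equivalent to $\Lambda$, and consequently $\Lambda \cong \End_A(P)$ for the projective right $A$-module $P = \bigoplus_{i=1}^r P_i^{e_i}$, where $P_1,\dots,P_r$ are representatives of the indecomposable projective right $A$-modules and each $e_i \geq 1$.

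Next, I apply Theorem~\ref{t:mCYbasic} to the basic algebra $A$: for any $m>2$, we obtain a $\Hom$-finite, $m$-Calabi-Yau triangulated category $\cC = \cC_{(Q,R,m)}$ (for some presentation $A \cong KQ/(R)$ with $R \subseteq \fr^2$) equipped with a canonical $m$-cluster-tilting object $T = \bigoplus_{i \in Q_0} T_i$ whose endomorphism algebra is $A$. Because each multiplicity $e_i$ is strictly positive, Remark~\ref{rem:endproj} applies directly and produces an $m$-cluster-tilting object $T_P = \bigoplus_{i \in Q_0} T_i^{e_i}$ in the \emph{same} category $\cC$ with
\[
\End_{\cC}(T_P) \cong \End_A(P) \cong \Lambda.
\]
This exhibits $\Lambda$ as an $m$-CY-tilted algebra, completing the proof.

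I do not foresee a genuine obstacle here; all of the nontrivial content has been placed in Theorem~\ref{t:mCYbasic} and Remark~\ref{rem:endproj}. The only point requiring mild care is ensuring that each multiplicity $e_i$ is at least one, which is precisely the hypothesis ``$e_i>0$ for all $i \in Q_0$'' needed in Remark~\ref{rem:endproj}, and which is automatic from the very definition of the basic algebra associated to $\Lambda$.
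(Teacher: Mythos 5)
Your proof is correct and follows essentially the same route as the paper: pass to the basic algebra $\bar{A}$ Morita equivalent to $\Lambda$, realize $\Lambda$ as $\End_{\bar{A}}(P)$ for a projective module $P$ containing every indecomposable projective as a summand, and then combine Theorem~\ref{t:mCYbasic} with Remark~\ref{rem:endproj}. The only difference is that you spell out the Morita-theoretic reduction in more detail than the paper does.
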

\begin{proof}
We can write $A \cong \End_{\bar{A}}(P)$ for a finite-dimensional, basic
algebra $\bar{A}$ and a finitely generated projective $\bar{A}$-module $P$
containing as direct summands all the indecomposable projective
$\bar{A}$-modules. Hence the claim is a consequence of Theorem~\ref{t:mCYbasic}
and Remark~\ref{rem:endproj}.
\end{proof}

\begin{remark}
If $\gldim A \geq 2$, then for any of the $m$-Calabi-Yau categories $\cC$
of Theorem~\ref{t:mCYbasic}, the small negative extension
$\Hom_{\cC}(T, \Sigma^{-(m-2)}T)$ of the canonical $m$-cluster-tilting object
$T$ cannot vanish. Had it vanished, Proposition~\ref{p:vosnex} would then imply
that $A$ is the path algebra of an acyclic quiver, a contradiction.
\end{remark}

\subsection{Examples}

Our first example is similar in spirit to~\cite[Example~3.5]{Guo11}.

\begin{example}
Consider the algebra $A=KQ/I$ where $Q$ is the left quiver
\begin{align*}
\xymatrix@=1.5pc{
& {\bullet} \ar[dr]^{\beta} \\
{\bullet} \ar[ur]^{\alpha} \ar[dr]_{\gamma} && {\bullet} \\
& {\bullet} \ar[ur]_{\delta}
}
& &
\xymatrix@=1.5pc{
& {\bullet} \ar[dr]^{\beta} \\
{\bullet} \ar[ur]^{\alpha} \ar[dr]_{\gamma} \ar[rr]^{\eta} && {\bullet} \\
& {\bullet} \ar[ur]_{\delta}
}
& &
\xymatrix@=1.5pc{
& {\bullet} \ar[dr]^{\beta} \\
{\bullet} \ar[ur]^{\alpha} \ar[dr]_{\gamma} && {\bullet} \ar[ll]^{\eps} \\
& {\bullet} \ar[ur]_{\delta}
}
\end{align*}
and $I=(R)$ for the system of relations $R=\{\alpha \beta - \gamma \delta\}$.
The algebra $A$ has global dimension $2$, hence it cannot be 2-CY-tilted
by~\cite[Corollary~2.1]{KellerReiten07}.

Let $B=B(Q,R)$ be the dg-algebra of Construction~\ref{con:dgQR}.
Its graded quiver is shown in the middle; the arrows $\alpha, \beta, \gamma$ and 
$\delta$ have degree $0$ while $\eta$ has degree $-1$ and $d(\eta)=\alpha \beta
- \gamma \delta$. Observe that the dg-algebra $B$ is quasi-isomorphic to
$A \cong \hh^0(B)$.

Let $m \geq 2$ and let $(\wt{Q},W)$ be the graded quiver with homogeneous
superpotential of degree $2-m$ of Construction~\ref{con:QW}.
The graded quiver $\wt{Q}$ is shown on the right; the degrees
of the arrows $\alpha, \beta, \gamma, \delta$ are $0$, that of $\eps$
is $2-m$ and the superpotential is $W=\eps(\alpha \beta - \gamma \delta)$.

The Ginzburg dg-algebra $\Gamma = \Gamma_{m+1}(\wt{Q},W)$ is the
$(m+1)$-Calabi-Yau completion of $B$ (Lemma~\ref{l:preprojB}).
Since $B$ is quasi-isomorphic to $A$
and $A$ is derived equivalent to the path algebra of the Dynkin quiver $D_4$,
the Morita invariance of Calabi-Yau
completions~\cite[Proposition~4.2]{Keller11} implies that $\Gamma$ is
Morita equivalent to $\Gamma_{m+1}(D_4,0)$, hence by~\cite[Corollary~3.4]{Guo11}
(or Proposition~\ref{p:vosnex}) the generalized $m$-cluster category
$\cC_{\Gamma}$ is triangle equivalent to the $m$-cluster category of type $D_4$.

If $m=2$ then $\hh^0(\Gamma) \cong K\wt{Q}/(\wt{R})$, where
$\wt{R}=\{\alpha \beta - \gamma \delta, \eps \alpha, \beta \eps, \eps \gamma,
\delta \eps\}$. This is a cluster-tilted algebra~\cite{BMR06} of type $D_4$,
which is the relation-extension~\cite{ABS08} of the tilted algebra $A$.
If $m>2$ then $\hh^0(\Gamma) \cong A$ and $A$ is $m$-CY-tilted by
Theorem~\ref{t:mCYtilted}.
\end{example}

\begin{example} \label{ex:nonuniq}
Let $K$ be algebraically closed and consider the algebra $A=K$ whose quiver
$Q$ is $\bullet$. Consider the two sequences of relations
$R=\{\}$ and $R'=\{0\}$ on $Q$.

Let $m > 2$. The Ginzburg dg-algebras $\Gamma=\Gamma(Q,R,m)$ and
$\Gamma'=\Gamma(Q,R',m)$ are given by the following graded quivers with
differentials
\begin{align*}
\begin{array}{lc}
\begin{array}{l}
_{|t|=-m} \\
_{d(t)=0}
\end{array}
&
\begin{array}{c}
\xymatrix{
{\bullet} \ar@(ur,dr)[]^t
}
\end{array}
\end{array}
&&
\begin{array}{cl}
\begin{array}{c}
\xymatrix{
{\bullet} \ar@(ur,dr)[]^t \ar@(l,u)[]^{\eps^*} \ar@(l,d)[]_{\eps}
}
\end{array}
&
\begin{array}{l}
_{|\eps^*|=-1,\, |\eps|=-(m-2),\, |t|=-m} \\
_{d(\eps) = d(\eps^*) = 0} \\
_{d(t) = \eps \eps^* - (-1)^m \eps^* \eps}
\end{array}
\end{array}
\end{align*}

A basis for the graded piece $\Gamma'^i$ is given by
$(\eps^*)^i$ if $0 < i < m-2$; $(\eps^*)^{m-2}, \eps$ if $i = m-2$;
$(\eps^*)^{m-1}, \eps \eps^*, \eps^* \eps$ if $i=m-1$ and $m>3$; and
$(\eps^*)^{m-1}, \eps \eps^*, \eps^* \eps, \eps^2$ if $i=m-1$ and $m=3$,
hence one computes
\begin{align} \label{e:hiGamma}
\dim_K \hh^{-i}(\Gamma) = 
\begin{cases}
1 & \text{if $i=0$,} \\
0 & \text{if $0 < i < m$,}
\end{cases}
&&
\dim_K \hh^{-i}(\Gamma') =
\begin{cases}
1 & \text{if $0 \leq i < m-2$,} \\
2 & \text{if $i=m-2$,} \\
2 + \delta_{3,m} & \text{if $i=m-1$.}
\end{cases}
\end{align}

Let $\cC=\cC(Q,R,m)$ and $\cC'=\cC(Q,R',m')$ be the corresponding
$m$-Calabi-Yau categories and let $T \in \cC$ and $T' \in \cC'$ be the
canonical $m$-cluster-tilting objects.
We have $\End_{\cC}(T) \cong \End_{\cC'}(T') \cong K$.

By Proposition~\ref{p:vosnex}, the category $\cC$ is triangle equivalent to the
$m$-cluster category of $Q$. Hence the $m$-cluster-tilting objects in $\cC$ 
are the indecomposable objects $\Sigma^j T$ for $0 \leq j < m$.
Since $\Hom_{\cC}(T, \Sigma^{-i} T) = 0$ for any $0 < i < m$, this remains
true if we replace $T$ by any of the other $m$-cluster-tilting objects
$\Sigma^j T$ in $\cC$.
On the other hand, $T'$ is an $m$-cluster-tilting object in $\cC'$ with 
$\Hom_{\cC'}(T', \Sigma^{-i} T') \neq 0$ for any $0 < i < m$
by~\eqref{e:hiGamma},
hence $\cC'$ cannot be triangle equivalent to $\cC$.
\end{example}

The previous example can be generalized as follows.
A \emph{Dynkin quiver} is a quiver obtained by orienting the edges of a Dynkin
diagram of type $A_n$ ($n \geq 1$), $D_n$ ($n \geq 4$) or $E_n$ ($n=6,7,8$).

\begin{prop}
Let $Q$ be a Dynkin quiver and let $m > 2$.
There exists an $m$-Calabi-Yau triangulated category
$\cC'$ with an $m$-cluster-tilting object $T'$ such that
$\End_{\cC'}(T') \cong KQ$ but $\cC'$ is not triangle equivalent to the
$m$-cluster category of $Q$.
\end{prop}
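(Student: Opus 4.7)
My plan is to mirror Example~\ref{ex:nonuniq}. Pick a vertex $v$ of $Q$ and let $R$ be the length-one sequence of relations whose unique entry is $0 \in e_v \fr e_v$. Then $R \subseteq \fr^2$, the two-sided ideal $(R)$ is zero, and $A := KQ/(R) \cong KQ$ is finite-dimensional since $Q$ is Dynkin. Applying Theorem~\ref{t:mCYtilted} to the triple $(Q, R, m)$ produces a Hom-finite $m$-Calabi-Yau triangulated category $\cC' := \cC_{(Q, R, m)}$ together with a canonical $m$-cluster-tilting object $T'$ satisfying $\End_{\cC'}(T') \cong KQ$, and part~(c) of the same theorem gives
\[
\dim_K \Hom_{\cC'}(T', \Sigma^{-(m-2)} T') \geq |R| = 1 ,
\]
so the vosnex condition~\eqref{e:vosnex} fails for $T'$.

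To derive $\cC' \not\simeq \cC_Q$, where $\cC_Q$ denotes the $m$-cluster category of $Q$, I would argue that in $\cC_Q$ every $m$-cluster-tilting object $U$ with $\End_{\cC_Q}(U) \cong KQ$ satisfies vosnex. Granted this, a triangle equivalence $\cC' \xrightarrow{\sim} \cC_Q$ would send $T'$ to such a $U$, forcing $T'$ itself to satisfy vosnex as well (since dimensions of Hom-spaces are preserved under equivalences), contradicting the lower bound displayed above. Note that the comparison category $\cC_Q$ is realised by Proposition~\ref{p:vosnex} as $\cC_{(Q, \varnothing, m)}$, whose canonical $m$-cluster-tilting object plays the role of ``the'' one satisfying vosnex.

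The main obstacle is the structural statement that every $m$-cluster-tilting $U \in \cC_Q$ with $\End_{\cC_Q}(U) \cong KQ$ has vanishing small negative self-extensions. The strategy I propose is to use the realisation $\cC_Q \simeq \cD^b(KQ)/F$ with $F = \nu \Sigma^{-m}$, lift any such $U$ to an object $\wt U$ in the fundamental domain of $F$, and exploit that $KQ$ is hereditary so that $\wt U$ decomposes in $\cD^b(KQ)$ as $\bigoplus_i \Sigma^{a_i} M_i$ for indecomposable $KQ$-modules $M_i$. The requirement $\End_{\cC_Q}(U) \cong KQ$ should, via the classification of basic tilting modules of a Dynkin hereditary algebra, force all $a_i$ to coincide and $\bigoplus_i M_i \cong KQ$, so that $\wt U \cong \Sigma^a(KQ)$ for a single integer $a$. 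The vosnex condition then follows because $KQ$ is concentrated in degree $0$ and both $\Sigma$ and $F$ preserve the relevant Hom-spaces, exactly as in the $R = \varnothing$ case of Proposition~\ref{p:vosnex}. For $Q$ of type $A_n$ the same classification can alternatively be extracted from the combinatorial description of $\cC_Q$ via $(m+2)$-angulations of a polygon, where the ``hereditary'' cluster-tilting configurations are precisely the shifts of the standard one.
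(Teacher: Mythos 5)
Your construction of $\cC'$ is fine, but the non-equivalence argument rests on a structural claim that is both unproved and, in the form you state it, false. You reduce $\cC' \not\simeq \cC_Q$ (where $\cC_Q$ is the $m$-cluster category) to the assertion that \emph{every} $m$-cluster-tilting object $U$ of $\cC_Q$ with $\End_{\cC_Q}(U) \cong KQ$ satisfies the vosnex condition~\eqref{e:vosnex}, and you propose to deduce this from the claim that any such $U$ is isomorphic to $\Sigma^a(KQ)$ for a single integer $a$. That classification is wrong. Already for $Q = A_2$ ($1 \to 2$) and $m=3$, the image of the tilting module $P_1 \oplus S_1$ (all your $a_i$ equal to $0$, yet $\bigoplus_i M_i \not\cong KQ$) and the object $S_1 \oplus \Sigma P_2$ (the $a_i$ genuinely distinct) are $m$-cluster-tilting with endomorphism algebra $KA_2$, and neither is a shift of $KQ$; in general the images of all tilting modules $T$ with $\End_{KQ}(T)\cong KQ$, plus further objects mixing several shifts, occur. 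So closing your argument would require verifying vosnex for an entire family of objects that you have not classified; this is a delicate statement (the introduction's references to Iyama--Yoshino and to \cite[Example~4.3]{KellerReiten08} show that ``hereditary endomorphism algebra'' does not imply vosnex in general $m$-Calabi-Yau categories), and nothing in the paper or your argument establishes it.

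The paper sidesteps this entirely with a counting trick, and that is precisely why its sequence $R$ consists of $n$ zero relations rather than one. Since $Q$ is Dynkin, $\cC_Q$ has only finitely many indecomposables, hence only finitely many $m$-cluster-tilting objects $U$ with $\End_{\cC_Q}(U)\cong KQ$; therefore there is a uniform bound $n$ with $\dim_K \Hom_{\cC_Q}(U,\Sigma^{-(m-2)}U) < n$ for all of them. Taking $R$ to be $n$ copies of the zero relation, Theorem~\ref{t:mCYtilted} gives $\dim_K\Hom_{\cC'}(T',\Sigma^{-(m-2)}T')\geq n$, which no image of $T'$ under a triangle equivalence $\cC'\simeq\cC_Q$ could achieve. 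Your single zero relation only yields the lower bound $1$, which does not exceed the dimensions occurring in $\cC_Q$ unless you first prove they all vanish. Either adopt the uniform-bound argument, or supply an actual proof of the vosnex property for every $m$-cluster-tilting object of $\cC_Q$ with endomorphism algebra $KQ$ --- the latter is a genuinely harder route than the problem requires.
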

\begin{proof}
Let $\cC$ be the $m$-cluster category of $Q$. Since $Q$ is Dynkin,
the category $\cC$ has only finitely many indecomposable objects, hence the
number of $m$-cluster-tilting objects $T$ in $\cC$ such that $\End_{\cC}(T)
\cong KQ$ is finite. Therefore there exists an integer $n$ such that
$\dim_K \Hom_{\cC}(T, \Sigma^{-(m-2)} T) < n$ for any such $T$.

Let $R = \{0, 0, \dots, 0\}$ be a sequence consisting of $n$ zero elements
(it does not matter which starting and ending vertex we assign to each zero
element) and let $\cC' = \cC_{(Q,R,m)}$. By Theorem~\ref{t:mCYtilted}, the
category $\cC'$ is $\Hom$-finite and $m$-Calabi-Yau with an $m$-cluster-tilting 
object $T'$ satisfying $\End_{\cC'}(T') \cong KQ$ and
$\dim_K \Hom_{\cC'}(T', \Sigma^{-(m-2)} T') \geq n$.

If $F \colon \cC' \simeq \cC$ were a triangulated equivalence, then $T=FT'$
would be an $m$-cluster-tilting object in $\cC$ with $\End_{\cC}(T) \cong KQ$
and $\dim_K \Hom_{\cC}(T, \Sigma^{-(m-2)}T) \geq n$, a contradiction.
\end{proof}

\bibliographystyle{amsplain}
\bibliography{dCYtilt}

\end{document}